\tikzstyle{block}=[draw opacity=0.7,line width=1.4cm]
\def\dom{\mathop{\mathrm{Dom}}\nolimits}
\def\Str{\mathop{\mathrm{Str}}\nolimits}
\def\str#1{\mathbf {#1}}
\def\arity#1{a(\rel{}{#1})}
\def\rel#1#2{R_{\mathbf{#1}}^{#2}}
\def\func#1#2{F_{\mathbf{#1}}^{#2}}
\def\K{{\mathcal K}}
\def\Fraisse{Fra\"{\i}ss\' e}
\theoremstyle{definition}
\newtheorem{defn}{Definition}[section]
\newtheorem*{remark}{Remark}
\theoremstyle{remark}
\theoremstyle{plain}
\newtheorem{thm}{Theorem}[section]
\begin{document}
\bibliographystyle{plain}

\title{Ramsey Classes with Closure Operations\\
{\Large (Selected Combinatorial Applications)}}

\author{
   \normalsize {\bf Jan Hubi\v cka$^*$}\\
   \normalsize {\bf Jaroslav Ne\v set\v ril}%
\thanks {The Computer Science Institute of Charles University (IUUK) is supported by grant ERC-CZ LL-1201 of the Czech Ministry of Education and CE-ITI P202/12/G061 of GA\v CR.}
\\
{\small Computer Science Institute of Charles University (IUUK)}\\
   {\small Charles University}\\
   {\small Malostransk\' e n\' am. 25, 11800 Praha, Czech Republic}\\
   {\normalsize 118 00 Praha 1}\\
   {\normalsize Czech Republic}\\
   {\small \{hubicka,nesetril\}@iuuk.mff.cuni.cz}
}

\date{ Dedicated to old friend Ron Graham}
\maketitle
\begin{abstract}

We state the Ramsey property of classes of ordered structures with closures and
given local properties.  This generalises many old and new results: the Ne\v set\v
ril-R\"odl Theorem,  the authors Ramsey lift of bowtie-free graphs as well as the Ramsey Theorem for Finite Models
(i.e.  structures with both functions and relations) thus providing the ultimate
generalisation of Structural Ramsey Theorem. 
We give here a more concise reformulation of the recent paper ``All those Ramsey classes (Ramsey classes with closures and forbidden homomorphisms)'' and the main purpose of this paper is to show several applications. Particularly we prove the Ramsey property of ordered sets with equivalences on the power set,  Ramsey theorem for Steiner systems,  Ramsey theorem for resolvable designs and a partial Ramsey type results for $H$-factorizable graphs.
All of these results are natural, easy to state, yet proofs
involve most of the theory developed.
\end{abstract}

\maketitle
\section {Introduction}
Extending classical early Ramsey-type results~\cite{Graham1971,Graham1972,Nevsetvril1976,Halpern1966,Abramson1978,Milliken1979}, the structural Ramsey theory originated at
the beginning of 1970's, see~\cite{Nevsetvril1996} for references  of the early history of the subject.

Let us start with the key definition of this paper.  Let $\mathcal K$ be a class of
structures endowed with embeddings. For objects $\str{A},\str{B}\in \mathcal K$
denote by ${\str{B}\choose \str{A}}$ the set of all sub-objects
$\widetilde{\str{A}}$ of $\str{B}$ which are   isomorphic
to $\str{A}$. (By a sub-object we mean that the inclusion is an embedding.)
Using this notation the central definition of this paper gets the following form:
A class $\mathcal K$ is a {\em Ramsey class} if for every two objects
$\str{A}$ and $\str{B}$ and for every positive integer $k$ there exists
object $\str{C} \in \mathcal K$ such that the following holds: For every partition
${\str{B}\choose \str{A}}$ in $k$ classes there exists
$\widetilde{\str B} \in {\str{C}\choose \str{B}}$ such that
${\widetilde{\str{B}}\choose \str{A}}$ belongs to one class of the
partition.  It is usual to shorten the last part of the definition as
$\str{C} \longrightarrow (\str{B})^{\str{A}}_k$.

In \cite{Graham1971,Graham1972,Nevsetvril1976,Nevsetvril1996,Halpern1966,Abramson1978,Milliken1979} are given many examples of Ramsey classes and the recent interest of people in topological dynamics, model theory \cite{Kechris2005,The2013,Solecki2016} and, of course, combinatorists 
led to a hunt for more and more Ramsey classes. Particularly the following theorem has been recently proved in \cite{Hubicka2016} (see also a strengthening in~\cite{Evans3}):

\begin{thm}[Ramsey Theorem for Finite Models]
\label{thm:models}
For every language $L$ involving both relations and functions 
the class of all linearly ordered $L$ structures is a Ramsey class.
\end{thm}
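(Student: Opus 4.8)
The plan is to reduce the functional case to the purely relational Ne\v set\v ril--R\"odl theorem while carefully tracking the closures forced by the functions. First I would replace each function symbol $F$ of arity $r$ by a relation symbol $R_F$ of arity $r+1$ recording its graph, turning every linearly ordered $L$-structure $\str{A}$ into a linearly ordered relational structure $\Phi(\str{A})$. The structures in the image are exactly those relational structures in which each $R_F$ is \emph{functional} (every $r$-tuple has a unique extension) and \emph{total}; functionality is a property defined by forbidding finitely many configurations, so it fits the pattern of a class of the form $\Forbh$. The crucial observation is that an $L$-embedding $\str{A}\hookrightarrow\str{B}$ is the same thing as a relational embedding $\Phi(\str{A})\hookrightarrow\Phi(\str{B})$ whose image is \emph{closed} under all the functions, that is, is an induced $L$-substructure. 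Thus the Ramsey problem for $L$-structures becomes a Ramsey problem for closed substructures of the relational encoding.

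Next I would set this up as a Ramsey class equipped with a closure operation. Define $\Cl(X)$ to be the substructure generated by $X$ (by repeatedly applying all the functions); for a fixed finite structure this closure is finite. To make the closures locally determined I would pass to a \emph{lift}: expand $\Phi(\str{A})$ by auxiliary relations recording, for each generated point, the term (the history of function applications) that produces it. With this bookkeeping the closure of any set is computable from local data, the functionality constraints become a finite set of forbidden homomorphic images, and the whole class is presented in the closure-plus-forbidden-homomorphisms form for which the general machinery is designed.

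The engine is then the Ne\v set\v ril--R\"odl theorem for ordered relational structures, which supplies the Ramsey object for the \emph{relational} reduct, ignoring closure. To upgrade this to closed copies I would run the partite construction: build $\str{C}$ in stages over a partite picture, apply the relational Ramsey theorem part by part, and amalgamate the resulting copies so that the function values demanded by different copies are identified consistently. Done correctly, this keeps every $R_F$ single-valued and total in $\str{C}$, so that $\str{C}$ is again a genuine $L$-structure, while guaranteeing that the monochromatic copy of $\str{B}$ extracted at the end is closed, hence an honest $L$-substructure.

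The main obstacle is exactly the interaction between amalgamation and closure. When two partial copies share a common base, the points each of them generates through the functions must be merged in a compatible way, for otherwise a function value would either be duplicated (violating single-valuedness) or omitted (violating closure of the extracted copy). Making this precise requires a strong multiamalgamation property for the lifted class that is compatible with the relational Ramsey step, together with a bound on how far closures propagate so that the induction in the partite construction terminates. Establishing this controlled amalgamation of closures, rather than the relational Ramsey input, is where essentially all the difficulty lies.
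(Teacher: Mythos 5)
First, a point of reference: this paper does not actually prove Theorem~\ref{thm:models} --- it quotes it from \cite{Hubicka2016} (see also \cite{Evans3}), and the logic of the paper flows in the opposite direction: Theorem~\ref{thm:models2} is taken as the base Ramsey class $\mathcal R$ that is fed into Theorem~\ref{thm:mainstrongclosures} in all the applications. Your architecture --- encode each function symbol by the relation recording its graph, observe that $L$-embeddings correspond exactly to relational embeddings with \emph{closed} image, and then run a closure-respecting partite construction on top of the Ne\v set\v ril--R\"odl theorem --- is indeed the architecture of the cited proof, where closures are tracked by ``closure descriptions'' much like your term-recording lift. Two small encoding slips: in this paper a function maps $d(F)$-tuples to $r(F)$-element \emph{sets}, so the graph relation needs arity $d(F)+r(F)$ rather than $r+1$, and functions are partial, so the totality requirement in your description of the image class should be dropped (embeddings preserve and reflect membership in $\dom(\func{}{})$, and images of copies are still closed, which is all that matters).

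The genuine gap sits exactly where you park it: ``apply the relational Ramsey theorem part by part, and amalgamate the resulting copies so that the function values demanded by different copies are identified consistently,'' followed by your admission that this controlled amalgamation is where essentially all the difficulty lies. That is precisely the step at which the classical toolkit breaks and no off-the-shelf argument exists. In the standard partite construction the induction is powered by the Partite Lemma, whose Hales--Jewett proof produces copies of $\str{B}$ that are \emph{not} closed once function symbols are present: points generated by vertices of a copy escape the combinatorial cube, and single-valuedness and closedness cannot be repaired afterwards. The proof in \cite{Hubicka2016} does not patch the classical lemma; it replaces it with a new Partite Lemma for structures with closures, itself proved by recursively iterating the partite construction, together with an explicit bound on how far closures propagate so that the recursion terminates. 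Nothing in your sketch supplies this lemma or a substitute for it, so as written the proposal is a correct plan whose engine room is empty. Relatedly, you cannot shortcut by invoking Theorem~\ref{thm:mainstrongclosures} as ``the general machinery'': as stated in this paper its hypothesis already demands a Ramsey base class $\mathcal R$ of irreducible structures in a language containing the function symbols, and the only supplier of such a class is Theorem~\ref{thm:models2} itself --- that route is circular. The non-circular route is exactly the relational-plus-closure-descriptions one you began, and it stands or falls with the new partite lemma.
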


Below in Section \ref{intro} we formulate this result  more precisely as Theorem \ref{thm:models2} after introducing 
all relevant notions. But already at this place let us add that  Theorem \ref{thm:models}
nicely complements results for relational structures  (Abramson-Harrington~\cite{Abramson1978}, Ne\v set\v ril-R\"odl~\cite{Nevsetvril1977b}),
who proved this result for finite relational systems. It is exactly the presence of functions (and function symbols) which makes Theorem \ref{thm:models} interesting and which presents some challenging problems. In turn  this aspect has some interesting applications. This we want to demonstrate here by several combinatorial examples: equivalences on the power set (thus solving a problem of \cite{Ivanov2015}), Steiner systems (proving \cite{bhat2016ramsey} in a different way), Ramsey theorem for resolvable designs, degenerate graphs and more generally $\mathcal{H}$-decomposable graphs.
As we shall demonstrate all these structures form 
Ramsey classes. These all seemingly very special theorems can be uniformly treated by means of the main
 result of \cite{Hubicka2016}.
Although the nature of \cite{Hubicka2016} is a blend of combinatorics and model theory we concentrate here on combinatorial side of the subject.
This paper is self-contained and in the next section we shall explain all the relevant notions.

\section {Preliminaries}
\label{intro}

We work with the following structures involving relations and symmetric partial functions.

Let $L=L_{\mathcal{R}}\cup L_{\mathcal{F}}$ be a language involving relational symbols $\rel{}{}\in L_{\mathcal{R}}$ and function symbols $F\in L_{\mathcal{F}}$ each having associated arities denoted by $\arity{}>0$ for relations and $d(F)>0, r(F)>0$ for functions. 
An \emph{$L$-model}, or sometimes \emph{$L$-structure}, is a structure $\str{A}$ with {\em vertex set} $A$, functions 
$\func{A}{}:\dom(\func{A}{})\to {A\choose {r(\func{}{})}}$,
 $\dom(\func{A}{})\subseteq A^{d(\func{}{})}$ for $\func{}{}\in L_F$ and relations $\rel{A}{}\subseteq A^{\arity{}}$ for $\rel{}{}\in L_R$. (Note that  by ${A\choose {r(\func{}{})}}$ we denote, as it is usual in this context, the set of all $r(\func{}{})$-element subsets of $A$.)
Set $\dom(\func{A}{})$ is called the {\em domain} of function $\func{}{}$  in $\str{A}$.

Note also that we have chosen to have the range of the function symbols to be the set of subsets (not  tuples). This is motivated by \cite{Evans3} where we deal with (Hrushovski) extension properties and we need a ``symmetric'' range. However from the point of view of Ramsey theory this is not an important issue.
 
The language is usually fixed and understood from the context (and it is in most cases denoted by $L$).  If set $A$ is finite we call $\str A$ a \emph{finite $L$-model} or \emph{$L$-structure}. We consider only structures with countably many vertices. 
If language $L$ contains no function symbols, we call $L$ a {\em relational language} and an $L$-structure is also called a \emph{relational $L$-structure}.
Every function symbol $\func{}{}$ such that $d(\func{}{})=1$ is a {\em unary function}. Unary relation is of course just defining a subset of elements of $A$.

A \emph{homomorphism} $f:\str{A}\to \str{B}$ is a mapping $f:A\to B$ satisfying for every $\rel{}{}\in L_{\mathcal R}$ and for every $\func{}{}\in L_\mathcal F$ the following two statements:
\begin{enumerate}
\item[(a)] $(x_1,x_2,\ldots, x_{\arity{}})\in \rel{A}{}\implies (f(x_1),f(x_2),\ldots,f(x_{\arity{}}))\in \rel{B}{}$, and,
\item[(b)]  $f(\dom(\func{A}{}))\subseteq \dom(\func{B}{})$
 and $f(\func{A}{}(x_1,x_2,\allowbreak \ldots, x_{d(\func{}{})}))=\func{B}{}(f(x_1),f(x_2),\allowbreak \ldots,\allowbreak f(x_{d(\func{}{})}))$ for every $(x_1,x_2,\allowbreak \ldots, x_{d(\func{}{})})\in \dom(\func{A}{})$.
\end{enumerate} 
For a subset $A'\subseteq A$ we denote by $f(A')$ the set $\{f(x);x\in A'\}$ and by $f(\str{A})$ the homomorphic image of a structure.

 If $f$ is injective, then $f$ is called a \emph{monomorphism}. A monomorphism is called an \emph{embedding} if
for every $\rel{}{}\in L_{\mathcal R}$ and $\func{}{}\in L_\mathcal F$ the following holds:
\begin{enumerate}
\item[(a)] $(x_1,x_2,\ldots, x_{\arity{}})\in \rel{A}{}\iff (f(x_1),f(x_2),\ldots,f(x_{\arity{}}))\in \rel{B}{}$, and,
\item[(b)]  
$(x_1,x_2,\ldots, x_{d(\func{}{})})\in\dom(\func{A}{}) \iff (f(x_1),\ldots,f(x_{d(\func{}{})}))\in \dom(\func{B}{}).$
\end{enumerate}
 
  If $f$ is an embedding which is an inclusion then $\str{A}$ is a \emph{substructure} (or \emph{subobject}) of $\str{B}$. For an embedding $f:\str{A}\to \str{B}$ we say that $\str{A}$ is \emph{isomorphic} to $f(\str{A})$ and $f(\str{A})$ is also called a \emph{copy} of $\str{A}$ in $\str{B}$. Thus $\str{B}\choose \str{A}$ is defined as the set of all copies of $\str{A}$ in $\str{B}$. Finally, $\Str(L)$ denotes the class of all finite $L$-models and all their embeddings.

For $L$ containing a binary relation $\rel{}{\leq}$ we denote by $\overrightarrow{\Str}(L)$ the class of all finite models (or $L$-structures) $\str{A}\in\Str(L)$ where the set $A$ is linearly ordered by the relation $\rel{}{\leq}$. $\overrightarrow{\Str}(L)$ is of course considered with all monotone (i.e. order preserving) embeddings.

The Ramsey theorem of finite models can now be stated as follows (\cite{Evans3}):

\begin{thm}
\label{thm:models2}
For every language $L$ involving both relations and functions and containing a binary relation $\rel{}{\leq}$ the class of all ordered $L$-structures $\overrightarrow{\Str}(L)$ is a Ramsey class.
\end{thm}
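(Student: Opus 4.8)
The plan is to obtain Theorem~\ref{thm:models2} by reducing it to the structural Ramsey theorem for ordered \emph{relational} structures (Abramson--Harrington~\cite{Abramson1978}, Ne\v set\v ril--R\"odl~\cite{Nevsetvril1977b}), with the understanding that the genuinely new content is the treatment of the function symbols. The first observation is that the functions are not an innocent decoration: because the value $\func{A}{}(x_1,\ldots,x_{d(\func{}{})})$ is a subset of $A$, an induced substructure on a set $X$ only makes sense when $X$ is \emph{closed} under every $\func{}{}\in L_\mathcal{F}$, i.e.\ whenever a tuple from $X$ lies in $\dom(\func{A}{})$ its value is again contained in $X$. Thus the functions endow each $L$-structure with an algebraic closure operation, and the copies counted in ${\str{C}\choose\str{B}}$ are exactly the \emph{closed} copies. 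This is precisely the setting of the Ramsey classes with closures of~\cite{Hubicka2016}, and the whole argument is organised around making the relational Ramsey machinery compatible with this closure.

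First I would encode the functions relationally. To each $\func{}{}\in L_\mathcal{F}$ with $d=d(\func{}{})$, $r=r(\func{}{})$ I associate a relational symbol of arity $d+r$ and, using the linear order $\rel{}{\leq}$ to list the output set canonically in increasing order, I replace $\func{A}{}$ by the relation recording its graph. This produces a functor $\str{A}\mapsto\Phi(\str{A})$ from $\overrightarrow{\Str}(L)$ into ordered relational structures over an expanded language. The image of $\Phi$ is not all of the relational class: it is cut out by finitely many \emph{functionality constraints} (each domain tuple carries exactly one output block, whose $r$ coordinates are strictly increasing and hence distinct), and these constraints are expressible as a finite family of forbidden homomorphic images. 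The closure operation of the previous paragraph is then the statement that these relations are algebraic over their first $d$ coordinates.

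Next I would establish amalgamation for the class carrying this closure. Given embeddings $\str{A}\hookrightarrow\str{B}_1$ and $\str{A}\hookrightarrow\str{B}_2$ with $\str{A}$ closed, I form the free amalgam of $\str{B}_1$ and $\str{B}_2$ over $\str{A}$: a tuple all of whose inputs already lie in $\str{A}$ has its value forced inside the common part, so the two sides define it identically, while a tuple with inputs genuinely split between the two exclusive parts is simply kept out of $\dom$. Hence no two definitions of a function value ever collide, the amalgam is again a legal $L$-structure (and remains closed, as each $\str{B}_i$ is), and the class is a multiamalgamation class in the sense required by~\cite{Hubicka2016}.

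The heart of the argument --- and the step I expect to be the main obstacle --- is to run the Partite Construction of Ne\v set\v ril--R\"odl in a \emph{closure-respecting} way, using the relational Ramsey theorem above only as the engine for the relational part. In the ordinary construction one repeatedly copies the picture $\str{B}$ into the fibres of a partite system and colours the copies of $\str{A}$; here each copy of $\str{B}$ drags along the function values it forces, and these forced vertices must be identified with vertices already present (or forced by other copies) rather than added freely. One must therefore prove a version of the Partite Lemma in which the amalgamation gadget is compatible with the closure, so that no identification creates a second output for some input tuple and no two distinct closed copies of $\str{A}$ are collapsed. Controlling these forced identifications --- keeping the system partite, keeping every fibre a legal closed $L$-structure, and still obtaining $\str{C}\longrightarrow(\str{B})^{\str{A}}_k$ --- is exactly the technical core supplied by~\cite{Hubicka2016}; once it is in place, the resulting $\str{C}$ is a valid object of $\overrightarrow{\Str}(L)$ and the theorem follows.
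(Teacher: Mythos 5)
The paper you are comparing against does not actually prove Theorem~\ref{thm:models2}: it states it as imported background, citing \cite{Evans3} (and, for the unordered formulation, \cite{Hubicka2016}), and explicitly takes it as the starting point for its applications. So the honest comparison is between your outline and the proof in those cited papers. At that level your reconstruction is faithful and your preparatory steps are sound: the observation that substructures are automatically closed under the partial functions (this follows from clause (b) of the embedding definition, since membership in $\dom(\func{A}{})$ must be reflected and values of tuples from $A$ then land in $A$), the relational encoding of each $\func{}{}$ as a $(d+r)$-ary relation with the output block listed in $\leq$-increasing order, the correct diagnosis that ${\Phi(\str{C})\choose\Phi(\str{B})}$ is strictly larger than ${\str{C}\choose\str{B}}$ so the naive reduction to Abramson--Harrington/Ne\v set\v ril--R\"odl colours too many copies, and the free-amalgamation argument (value of a tuple inside $\str{A}$ is forced into $\str{A}$ by closedness, split tuples stay outside the domain --- though note that for the \emph{ordered} class the amalgam is strong but not free, since the union of the two linear orders must still be completed to a linear order).

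You should be clear, however, about what your text actually establishes. Everything difficult is concentrated in your final paragraph, and there you invoke rather than prove the closure-compatible Partite Lemma and Partite Construction: as written, your argument is a reduction of Theorem~\ref{thm:models2} to the main theorem of \cite{Hubicka2016}, not an independent proof. That is not a defect relative to the paper under review --- it treats the theorem in exactly the same way, remarking only that the proof ``involves interplay of several key constructions of structural Ramsey theory particularly Partite Lemma and Partite Construction'' --- but it means your proposal should be presented as an accurate architectural summary of the cited proof (correct identification of where the closures enter and why forced identifications of function values are the obstruction), not as a self-contained derivation. One further small caution: your functionality constraints (two output blocks over the same input tuple) are \emph{not} irreducible substructures in the encoded relational language, so they fall outside the classical Ne\v set\v ril--R\"odl forbidden-configuration framework; handling them is again part of the machinery of \cite{Hubicka2016}, which is consistent with, but not replaceable by, the relational Ramsey theorem you use as the engine.
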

Ramsey Theorem for Finite Models
(i.e.  for structures with both functions and relations) thus provides the ultimate
generalisation of  structural  Ramsey Theorem for relational structures \cite{Abramson1978,Nevsetvril1976}. In this paper this theorem will be our starting point.
We remark that a generalisation of Ne\v set\v ril-R\"odl Theorem for languages involving both functions and relations is also a main result of~\cite{Solecki2012}. Our definition of
functions however differs from the one used in~\cite{Solecki2012}.

Before stating more special (and stronger) Ramsey type statements we need to review 
some more  model-theoretic notions (see
e.g.~\cite{Hodges1993}).

\begin{figure}
\centering
\includegraphics{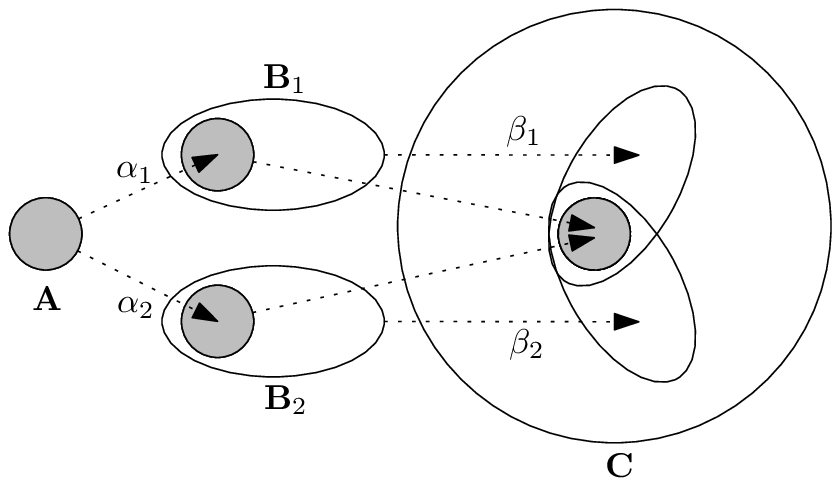}
\caption{An amalgamation of $\str{B}_1$ and $\str{B}_2$ over $\str{A}$.}
\label{amalgamfig}
\end{figure}
Let $\str{A}$, $\str{B}_1$ and $\str{B}_2$ be relational structures and $\alpha_1$ an embedding of $\str{A}$
into $\str{B}_1$, $\alpha_2$ an embedding of $\str{A}$ into $\str{B}_2$, then
every structure $\str{C}$
 with embeddings $\beta_1:\str{B}_1 \to \str{C}$ and
$\beta_2:\str{B}_2\to\str{C}$ such that $\beta_1\circ\alpha_1 =
\beta_2\circ\alpha_2$ is called an \emph{amalgamation} of $\str{B}_1$ and $\str{B}_2$ over $\str{A}$ with respect to $\alpha_1$ and $\alpha_2$. See Figure~\ref{amalgamfig}.
We will call $\str{C}$ simply an \emph{amalgamation} of $\str{B}_1$ and $\str{B}_2$ over $\str{A}$
(as in the most cases $\alpha_1$ and $\alpha_2$ can be chosen to be inclusion embeddings).

We say that an amalgamation is \emph{strong} when $\beta_1(x_1)=\beta_2(x_2)$ 
only if $x_1\in \alpha_1(A)$ and $x_2\in \alpha_2(A)$.  Less formally, a strong
amalgamation glues together $\str{B}_1$ and $\str{B}_2$ with the overlap no
greater than the copy of $\str{A}$ itself.  A strong amalgamation is \emph{free} if there are no tuples in any relations of $\str{C}$ and no tuples in $\dom(\func{C}{})$, $\func{}{}\in L$,  using vertices of both
$\beta_1(B_1\setminus \alpha_1(A))$ and $\beta_2(B_2\setminus \alpha_2(A))$.

An \emph{amalgamation class} is a class $\K$ of finite structures satisfying the following three conditions:
\begin{enumerate}
\item {Hereditary property:} For every $\str{A}\in \K$ and a substructure $\str{B}$ of $\str{A}$ we have $\str{B}\in \K$;
\item {Joint embedding property:} For every $\str{A}, \str{B}\in \K$ there exists $\str{C}\in \K$ such that $\str{C}$ contains both $\str{A}$ and $\str{B}$ as substructures;
\item {Amalgamation property:} 
For $\str{A},\str{B}_1,\str{B}_2\in \K$ and $\alpha_1$ embedding of $\str{A}$ into $\str{B}_1$, $\alpha_2$ embedding of $\str{A}$ into $\str{B}_2$, there is $\str{C}\in \K$ which is an amalgamation of $\str{B}_1$ and $\str{B}_2$ over $\str{A}$ with respect to $\alpha_1$ and $\alpha_2$.
\end{enumerate}

\section{Previous work --- multiamalgamation} 

We now refine amalgamation classes.
Our aim is to describe  even stronger sufficient criteria for  Ramsey classes.

\begin{figure}
\centering
\includegraphics{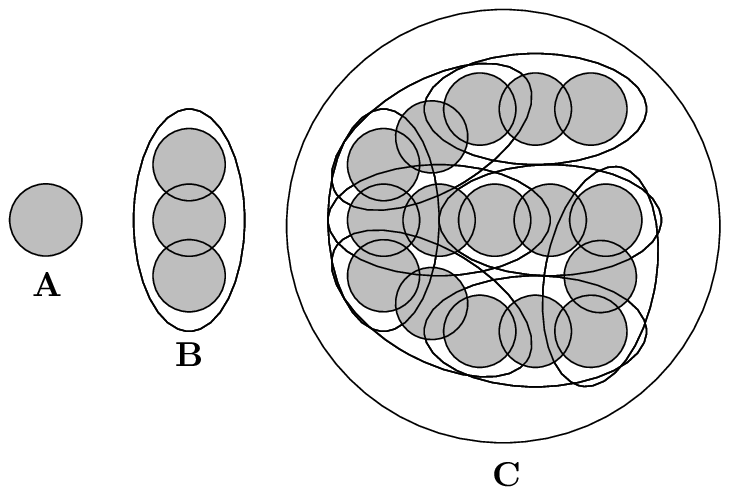}
\caption{Construction of a Ramsey object by multiamalgamation.}
\label{fig:multiamalgam}
\end{figure}
First, we develop a generalised notion of amalgamation which will serve as a useful tool for the construction of Ramsey objects. As schematically depicted in Figure~\ref{fig:multiamalgam}, Ramsey objects are a result of
amalgamation of multiple copies of a given structure which are all performed at once. In a non-trivial class this leads to many problems. Instead of working with complicated amalgamation diagrams we  split the amalgamation into two steps---the construction of (up to
isomorphism unique) free amalgamation (which yields an incomplete or ``partial'' structure) followed then by a completion. Formally this is done as follows:

\begin{defn}
\label{def:irreducible}
An $L$-structure $\str{A}$ is \emph{irreducible} if 
$\str{A}$ is not a free amalgam of two proper substructures of $\str{A}$.
\end{defn}

\begin{remark}
In the case of relational structures this definition allows a combinatorial description: $\str{A}$ is irreducible if
for every pair of distinct vertices $u$, $v$ there is  a tuple $\vec{t}\in \rel{A}{}$
(of some relation $\rel{}{}\in L_{\mathcal{R}}$) such that $\vec{t}$ contains both $u$ and $v$. In the case of structures with function symbols this is more complicated as  we have to consider closures defined below. 
\end{remark}

Thus the irreducibility is meant with respect to the free amalgamation. The irreducible structures are our building blocks.
Moreover in structural Ramsey theory we are fortunate that most structures are (or may be interpreted as) irreducible. And in the most interesting case, the structures may be completed to irreducible structures. 
This will be introduced now by means of the following variant of the homomorphism notion.
\begin{defn}
 A homomorphism
$f:\str{A}\to\str{B}$ is \emph{homomorphism-embedding}  if $f$ restricted to any irreducible substructure of $\str{A}$      
is an embedding to $\str{B}$.
\end{defn}
While for (undirected) graphs the homomorphism and homomorphism-em\-bed\-ding coincide, for structures they differ.
For example any homomorphism-embedding of the Fano plane into a  hypergraph is actually an embedding.

\begin{defn}
\label{defn:completion}
Let $\str{C}$ be a structure. An irreducible structure $\str{C}'$ is a \emph{completion}
of $\str{C}$ if there exists a homomorphism-embedding $\str{C}\to\str{C}'$.

Let $\str{B}$ be an irreducible substructure of $\str{C}$.  We
say that irreducible structure $\str{C}'$ is a \emph{completion of $\str{C}$ with respect to
copies of $\str{B}$} if there exists a function
$f:C\to C'$ such that for every  $\widetilde{\str{B}}\in
{\str{C}\choose \str{B}}$ the function $f$ restricted to $\widetilde{B}$
is an embedding of $\widetilde{\str{B}}$ to $\str{C}'$.

If $\str{C}'$ belongs to a given class $\K$, then $\str{C}'$ is called a \emph{$\K$-completion of $\str{C}$ with respect to copies of $\str{B}$}.
\end{defn}
\begin{remark}[on completion and holes]
Completion may be seen as a generalised form of amalgamation.
To see that let $\K$ be a class of irreducible structures. The 
amalgamation property of $\K$ can be equivalently formulated as follows: For $\str{A}$, $\str{B}_1$, $\str{B}_2 \in \K$ and $\alpha_1$ embedding of $\str{A}$ into $\str{B}_1$, $\alpha_2$ embedding of $\str{A}$ into $\str{B}_2$, there is $\str{C}\in \K$ which is a  completion
of the free amalgamation (which itself is not necessarily in $\K$) of $\str{B}_1$ and $\str{B}_2$ over $\str{A}$ with respect to $\alpha_1$ and $\alpha_2$.

Free amalgamation may result in a reducible structure. The pairs of vertices  where
one vertex belong to $\str{B}_1\setminus \alpha_1(\str{A})$ and the other to $\str{B}_2\setminus \alpha_2(\str{A})$ are never both contained in a single tuple
of any relation. Such pairs can be thought of as  \emph{holes} and a completion is then a process of filling in the
holes to obtain irreducible structures (in a given class $\mathcal{K}$) while preserving all embeddings of irreducible structures.

Completion with respect to copies of $\str{B}$ is the weakest notion of completion which preserve the Ramsey property for given structures $\str{A}$ and $\str{B}$.
Note that in this case $f$ does not need to be a homomorphism-embedding (and even homomorphism).
\end{remark}

We now state all necessary conditions for our main result stated in this section:
\begin{defn}
\label{def:multiamalgamation}
Let $L$ be a language, $\mathcal R$ be a Ramsey class of finite irreducible $L$-structures.
We say that a subclass $\mathcal K$ of $\mathcal R$  is an \emph{$\mathcal R$-multiamalgamation class} if
the following conditions are satisfied:
\begin{enumerate}
 \item\label{cond:hereditary} {\em Hereditary property:} For every $\str{A}\in \K$ and a substructure $\str{B}$ of $\str{A}$ we have $\str{B}\in \K$.
 \item\label{cond:amalgamation} {\em Strong amalgamation property:}
For $\str{A},\str{B}_1,\str{B}_2\in \K$ and $\alpha_1$ embedding of $\str{A}$ into $\str{B}_1$, $\alpha_2$ embedding of $\str{A}$ into $\str{B}_2$, there is $\str{C}\in \K$ which contains a strong amalgamation of $\str{B}_1$ and $\str{B}_2$ over $\str{A}$ with respect to $\alpha_1$ and $\alpha_2$ as a substructure.
 \item\label{cond:completion} {\em Locally finite completion property:} Let $\str{B}\in \K$ and $\str{C}_0\in \mathcal R$. Then there exists $n=n(\str{B},\str{C}_0)$ such that if closed $L$-structure $\str{C}$ satisfies the following:
\begin{enumerate}
 \item there is a homomorphism-embedding from $\str{C}$ to $\str{C}_0$,
 (in other words, $\str{C}_0$ is a completion of $\str{C}$), and,
 \item every substructure of $\str{C}$ with at most $n$ vertices has a $\K$-completion.
\end{enumerate}
Then there exists $\str{C}'\in \K$ that is a completion of $\str{C}$ with respect to copies of $\str{B}$.
\end{enumerate}
\end{defn}

We can now state the main result of \cite{Hubicka2016} as:
\begin{thm}[\cite{Hubicka2016}]
\label{thm:mainstrongclosures}
Every $\mathcal R$-multiamalgamation class $\K$ is Ramsey.
\end{thm}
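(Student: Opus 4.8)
The plan is to combine the Ramsey property of the ambient class $\mathcal{R}$ with the three structural hypotheses on $\K$ by means of the \emph{partite construction}, and then to turn the resulting partite ``pre-Ramsey'' object into a genuine member of $\K$ using the locally finite completion property. Fix $\str{A},\str{B}\in\K$ and a number of colors $k$; since $\K\subseteq\mathcal{R}$, both $\str{A}$ and $\str{B}$ are irreducible members of $\mathcal{R}$. As $\mathcal{R}$ is Ramsey it provides arrows for every number of colors, so there is no need to reduce to $k=2$, and the goal is to produce $\str{C}'\in\K$ with $\str{C}'\longrightarrow(\str{B})^{\str{A}}_k$.

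First I would fix the partite framework. Writing $b=|B|$ and listing the vertices of $\str{B}$ in their order as names of $b$ parts $P_1,\dots,P_b$, a $b$-partite structure is one whose vertex set is partitioned into the $P_i$, and a copy of $\str{B}$ (resp.\ of $\str{A}$) is \emph{transversal} if it meets each part in at most one vertex in the order-preserving way; by construction every copy of $\str{A}$ lying inside a transversal copy of $\str{B}$ is again transversal. The engine is the \emph{partite lemma}: for one transversal type of copies of $\str{A}$ one finds, using only the arrow $\str{C}_0\longrightarrow(\str{B})^{\str{A}}_k$ supplied by $\mathcal{R}$, a $b$-partite picture $\str{P}$ in which every $k$-coloring of the transversal copies of $\str{A}$ of that type is constant on the transversal copies of $\str{A}$ inside some transversal copy of $\str{B}$.

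Next I would run the partite construction proper. Enumerate the finitely many transversal types as $1,\dots,t$, start from a picture $\str{P}_0$ realizing every transversal copy of $\str{B}$, and pass from $\str{P}_{i-1}$ to $\str{P}_i$ by amalgamating many copies of $\str{P}_{i-1}$ over the sub-picture carrying the $i$-th type, applying the partite lemma one type at a time. An induction on $i$ shows that $\str{C}:=\str{P}_t$ has the transversal Ramsey property: any $k$-coloring of its transversal copies of $\str{A}$ admits a transversal copy of $\str{B}$ all of whose copies of $\str{A}$ get one color. Since all gluing is performed across distinct parts, $\str{C}$ is in general reducible---the amalgamations leave ``holes'' between vertices of different copies---so that every substructure of $\str{C}$ on boundedly many vertices embeds into a strong amalgam of boundedly many copies of $\str{B}$. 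The strong amalgamation property, applied repeatedly, yields a member of $\K$ completing such an amalgam, and restricting the associated homomorphism-embedding gives a $\K$-completion of the bounded substructure; at the same time, by construction, the whole of $\str{C}$ admits a homomorphism-embedding into a single $\str{C}_0\in\mathcal{R}$ assembled from the ambient Ramsey witnesses used along the way.

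Finally I would invoke the locally finite completion property for $\str{B}$ and this $\str{C}_0$: it produces a bound $n$, and by the previous paragraph every substructure of $\str{C}$ on at most $n$ vertices has a $\K$-completion while $\str{C}_0$ is a completion of $\str{C}$. Hence there is $\str{C}'\in\K$ that is a completion of $\str{C}$ with respect to copies of $\str{B}$, witnessed by some $f\colon C\to C'$. To see $\str{C}'\longrightarrow(\str{B})^{\str{A}}_k$, pull any $k$-coloring of ${\str{C}'\choose\str{A}}$ back along $f$ to a coloring of the transversal copies of $\str{A}$ in $\str{C}$ (each such copy maps, under the embedding $f$ restricts to on any copy of $\str{B}$ containing it, to a genuine copy of $\str{A}$ in $\str{C}'$); the transversal Ramsey property then gives a transversal $\widetilde{\str{B}}$ in $\str{C}$ on which the pulled-back coloring is constant, and $f(\widetilde{\str{B}})$ is the desired monochromatic copy of $\str{B}$ in $\str{C}'$. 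The step I expect to be the real obstacle is ensuring that $\str{C}$ meets \emph{both} hypotheses of the locally finite completion property at once---organizing the partite construction so that it keeps a homomorphism-embedding into an $\mathcal{R}$-object while guaranteeing that every substructure of bounded size is $\K$-completable---since this is exactly where the combinatorics of the holes, the strong amalgamation property, and the finiteness bound $n$ have to be reconciled.
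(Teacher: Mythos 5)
The paper you are comparing against does not actually prove this theorem: it quotes it from \cite{Hubicka2016} and only remarks that the proof ``involves interplay of \ldots{} Partite Lemma and Partite Construction''. Your outline follows exactly that strategy, so at the level of architecture you match the cited proof; but as a proof your proposal has two genuine gaps, and they sit precisely where \cite{Hubicka2016} does its real work. First, the partite lemma. You claim it is obtained ``using only the arrow $\str{C}_0\longrightarrow(\str{B})^{\str{A}}_k$ supplied by $\mathcal{R}$''. That is not so: the arrow in $\mathcal{R}$ only furnishes the skeleton on which the zeroth picture is laid out; the partite lemma is an independent statement about $\str{A}$--$\str{B}$-partite systems, classically proved from the Hales--Jewett theorem, not from any ambient Ramsey arrow. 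Moreover, in the presence of function symbols the classical Ne\v set\v ril--R\"odl partite lemma breaks down --- closures of transversal sets need not respect the parts, and copies of $\str{B}$ in the witness must be \emph{closed} substructures --- so \cite{Hubicka2016} has to prove a new partite lemma directly from Hales--Jewett, with the witness built as a subpower of $\str{B}$ precisely so that it is a free amalgam of closed copies of $\str{B}$ and its small substructures remain $\K$-completable. Without this ingredient the type-by-type amalgamation engine you invoke does not exist in the setting of the theorem.

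Second, the verification that the final picture $\str{C}$ satisfies both hypotheses of the locally finite completion property. Your claim that ``every substructure of $\str{C}$ on boundedly many vertices embeds into a strong amalgam of boundedly many copies of $\str{B}$'' is plausible for the zeroth picture but not literally true for the later ones, which are Hales--Jewett-type powers of earlier pictures glued along sub-pictures; showing that every substructure on at most $n$ vertices has a $\K$-completion requires an induction along the whole construction, using that intersections of copies are kept closed so that the strong amalgamation property of $\K$ is actually applicable, and this bookkeeping is the bulk of the proof in \cite{Hubicka2016}. You yourself name this as ``the real obstacle'' --- an accurate diagnosis, but naming the crux is deferring it, not resolving it. A minor, fixable slip in the endgame: a transversal copy of $\str{A}$ in $\str{C}$ need not lie inside any copy of $\str{B}$, in which case $f$ restricted to it need not be an embedding and the pulled-back colour is undefined; colour such copies arbitrarily, which is harmless since every copy of $\str{A}$ inside the monochromatic transversal $\widetilde{\str{B}}$ does lie in a copy of $\str{B}$, and copies of $\str{A}$ in $f(\widetilde{\str{B}})$ correspond bijectively to those in $\widetilde{\str{B}}$ because $f$ restricted to $\widetilde{\str{B}}$ is an embedding.
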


The proof of this result is not easy and involves interplay of several key constructions of 
structural Ramsey theory particularly Partite Lemma and Partite Construction (see \cite{Hubicka2016} for  details). 

Paper \cite{Hubicka2016} also contains several applications of this result. Here we add some new combinatorial ones in three areas mentioned in the introduction.
All our  results share a common pattern: We start with a carefully chosen special case of Theorem \ref{thm:models2} and check that all defining restrictions fit into Theorem \ref{thm:mainstrongclosures}

We find it convenient (and intuitive) also to use the terminology of closed sets.
We say that a structure $\str{A}$ is \emph{closed in $\str{B}$} if there is an embedding  $\str{A} \to \str{B}$.
So all our Ramsey theorems deal with closed sets.
\begin{remark}
For amalgamation classes of irreducible ordered structures our definition of closure is
equivalent with the model-theoretic definition of the algebraic
closure  considered in the \Fraisse{} limit of the class (see e.g. \cite{Hodges1993}).  This follows from the fact that the
closure relations are definable in the structure. Note that in \cite{Hubicka2016} the closures are handled equivalently by means of ``closure descriptions''. The advantage of such a definition is that it involves the degree condition
which  is easier to control than the abstract closure one. Here we proceed differently as our starting point is Theorem \ref{thm:mainstrongclosures}.
\end{remark}
\section{Power Set Equivalences}
\label{sec:partitions}

In this section we consider  objects which consist of sets on which there are partitions of elements, pairs or subsets of larger size. 

The essential part of more complex Ramsey classes is handling structures with equivalences defined on vertices (and even tuples of vertices).
Such an equivalence may be present latently (as for example in $S$-metric spaces with jump numbers~\cite{Sauer2013}, antipodal metric spaces~\cite{Aranda2017} or bowtie-free graphs~\cite{Hubivcka2014,Cherlin1999,Komjath1999}). It is an important fact that such equivalences may have unboundedly many equivalence classes
and thus one cannot assign labels to them and use Theorem~\ref{thm:mainstrongclosures} directly.
Here we show how to handle this.

We start our analysis with the following  definition:

\begin{defn}
A $k$-equivalence  $\str{A}$ consists of a linearly ordered set $(A, \leq_{\str{A}})$
 together with an equivalence $E_{\str{A}}$ on the set ${A}\choose {k}$. Given two partition structures $\str{A}$ and $\str{B}$ a mapping $f: A \to B$ is said to be an \emph{embedding} if $f$ is a monotone injection and if for every two sets $M, M'\in {A\choose k}$ holds
$$ (M,M') \in E_{\str{A}}  \hbox{ if and only if } (f(M),f(M')) \in E_{\str{B}}.$$

 Note that $1$-equivalence is just an equivalence on set $A$. Already $2$-equiv\-a\-lences are interesting: they may be defined as edges vs. non-edges of a graph.
We denote by $\mathcal{EQ}_k$ the class of all finite $k$-equivalences and all embeddings between them.
\end{defn}

It is easy to see that  $\mathcal{EQ}_k$  fails to be a Ramsey class. To see this it suffices to consider  already  equivalences defined by sets of vertices (i.e. equivalences on singletons only).
This corresponds to colourings of disjoint copies of complete graphs.
(In this particular case ($k=1$) one can easily save being Ramsey by a choice of special (admissible or convex) orderings).

 In fact for our purposes (i.e. application of Theorem \ref{thm:mainstrongclosures}) the equivalences with unboundedly many classes have to be interpreted so they can be viewed as structures
over a finite language. This is a place where functions (and closure operations) may be used effectively.

This
can be done as follows:

\begin{defn}
A \emph{labelled $k$-equivalence} $\str{A}$ consists of a linearly ordered set $(A, \leq_{\str{A}})$ together with 
a mapping $\ell_{\str{A}}: {{A}\choose {k}} \to \mathbb N$  (``labelling'' is thus symbolised by  $\ell$). We tacitly assume that the sets $A$ and $\mathbb N$ are disjoint.
By $\mathrm{Rg}(\ell_{\str{A}})$ we denote the range of labelling $\ell_{\str{A}}$.

Given two labelled $k$-equivalences $\str{A}$ and $\str{B}$ a mapping $f: A \to B$ is said to be an \emph{embedding} if $f$ is a monotone injection and if for every   subset $M \in {{A}\choose {k}}$ holds
$\iota(\ell_{\str{A}}(M)) =  \ell_{\str{B}}(f(M)),$
where $\iota: \mathrm{Rg}(\ell_{\str{A}}) \to  \mathrm{Rg}(\ell_{\str{B}})$ (which is defined by the above formula) is monotone and injective.

We denote by $\mathcal{LEQ}_k$ the class of all finite labelled $k$-equivalences and all embeddings between them.
\end{defn}

Thus the equality of labels and their order is also preserved. Subsets with the same label represent an equivalence class. In \cite{Hubicka2016} labelled partition structures are called \emph{pointed equivalences}.)
Explicitely, the correspondence with equivalences is as follows:

For an equivalence  $E$  on set ${{A}\choose{k}}$ we assign to every equivalence class $C$ of $E$ a label $i_C \in \mathbb N$  and a mapping $\ell_E:{{A}\choose{k}} \to \mathbb N$ which maps every element of $C$ to $i_C$ (that is, $\ell_E(M)=i_C$ for every $M\in C$).
What we obtain is a structure $\str{A}(E)$ in the language $L = \{\rel{}{},\leq,F\}$ consisting of a function symbol $\func{}{}$ (corresponding to labelling $\ell$), binary relational symbol $\leq$ and unary relational symbol $\rel{}{}$ (representing the labels).
The class, denoted for the moment by $\mathcal R=\overrightarrow{\Str}(L)$, of all ordered $L$-models is a Ramsey class by virtue of Theorem \ref{thm:models2}. The class $\mathcal{LEQ}_k$ is described by local conditions in $\mathcal R$. More precisely $\str{A}\in \mathcal R$ is in $\mathcal{LEQ}_k$ if and only if it satisfies:
\begin{enumerate}
 \item $x \notin \vec{t}\in \dom(\func{A}{})$ whenever $(x)\in \rel{}{}$;
 \item every $\vec{t}\in Dom(\func{A}{})$ consists of distinct elements of $A$;
 \item $(y)\in \rel{}{}$ whenever $\func{A}{}(\vec{t}) = \{y\}$;
 \item if $\func{A}{}(\vec{t})$ is defined and $k$-tuple $\vec{t}_2$ is created by reordering vertices of $\vec{t}$, then $\func{A}{}(\vec{t})=\func{A}{}(\vec{t}_2)$. 
\end{enumerate}
This  then axiomatises  objects of $\mathcal{LEQ}_k$ within $\mathcal R$.
Applying Theorem \ref{thm:mainstrongclosures}  we now have the following

\begin{thm}
\label{equivalence}
The class $\mathcal{LEQ}_k$ is a Ramsey class.
\end{thm}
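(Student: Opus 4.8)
The plan is to verify that $\mathcal{LEQ}_k$, viewed as a subclass of the Ramsey class $\mathcal R=\overrightarrow{\Str}(L)$ with $L=\{R,\leq,F\}$, is an $\mathcal R$-multiamalgamation class and then invoke Theorem~\ref{thm:mainstrongclosures}. So the three conditions of Definition~\ref{def:multiamalgamation}---hereditary property, strong amalgamation property, and locally finite completion property---must all be checked. First I would observe that the four local axioms listed above are manifestly preserved under taking substructures: restricting the labelling $\ell$ and the relation $R$ to a subset $A'\subseteq A$ keeps all four conditions intact, since they are universally quantified sentences. Hence the hereditary property is immediate.

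For the strong amalgamation property, given $\str{A},\str{B}_1,\str{B}_2\in\mathcal{LEQ}_k$ with $\str A$ embedding into both, the idea is to form the strong amalgam on the vertex level (gluing only over the copy of $\str A$) and then to reconcile the labellings. The subtlety is that labels are only meaningful up to the monotone injection $\iota$ built into the definition of embedding, so the labels coming from $\str{B}_1$ and $\str{B}_2$ live in abstract copies of $\mathbb N$ that must be merged consistently. I would relabel using a common linearly ordered set: keep the ordering forced by $\str A$ and then interleave the remaining labels of $\str{B}_1$ and $\str{B}_2$ in any order-preserving way, identifying two labels exactly when they are forced equal by a common $k$-set inside $\str A$. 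For $k$-sets meeting both $B_1\setminus A$ and $B_2\setminus A$, the strong amalgam leaves $F$ undefined (these are the ``holes''), so there is no constraint to violate; one then assigns such sets fresh pairwise-distinct labels, or simply leaves them to be filled in later. The resulting structure satisfies all four axioms and contains the strong amalgam as required.

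The main obstacle, as usual in this framework, is the locally finite completion property. Here I expect the key point to be that all the relevant constraints are strictly local: conditions (1)--(4) each involve only a single $k$-set $\vec t$ (together with at most its image singleton under $F$), so a structure $\str C$ fails to be in $\mathcal{LEQ}_k$ only if some bounded-size substructure already witnesses the failure. The natural choice is to take $n=n(\str B,\str C_0)$ large enough to contain any single potential violation of the axioms---so $n$ need only exceed $k$ together with the sizes needed to see one defined $F$-value and its label. Given such a $\str C$ with a homomorphism-embedding into $\str C_0$ and with every $\le n$-vertex substructure $\K$-completable, I would complete $\str C$ to a member of $\mathcal{LEQ}_k$ by defining $F$ on the holes: each undefined $k$-set receives a brand-new label distinct from all existing ones and from each other, and the unary relation $R$ is updated accordingly. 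The local completability hypothesis guarantees that no two $k$-sets are secretly forced to carry equal labels in a way that conflicts, and the existence of the homomorphism-embedding into $\str C_0$ guarantees that distinct vertices stay distinct, so no two vertices are glued. Since every newly defined label is fresh, the symmetry axiom (4) and the consistency axioms (1)--(3) hold trivially, and the completion lies in $\mathcal{LEQ}_k$.

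What remains is to confirm that this completion is a completion \emph{with respect to copies of $\str B$}, i.e.\ that the completing map restricts to an embedding on each copy $\widetilde{\str B}\in\binom{\str C}{\str B}$. Because each copy of $\str B$ is already irreducible and its labelling structure is preserved by the homomorphism-embedding into $\str C_0$, the fresh-label assignment on the holes never collapses any label-equalities or label-orderings internal to $\widetilde{\str B}$; distinct labels inside $\widetilde{\str B}$ stay distinct and ordered, and equal labels inside $\widetilde{\str B}$ were already equal before completion. Thus $f$ restricted to $\widetilde B$ is an embedding of labelled $k$-equivalences, and all conditions of Definition~\ref{def:multiamalgamation} are met. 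Theorem~\ref{thm:mainstrongclosures} then yields that $\mathcal{LEQ}_k$ is Ramsey.
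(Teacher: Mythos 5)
There is a genuine gap in your completion step, and it is exactly where the paper's proof takes the opposite route. You keep \emph{all} existing $F$-values of $\str{C}$ and merely add fresh labels on the undefined $k$-sets. But in condition (\ref{cond:completion}) of Definition~\ref{def:multiamalgamation}, $\str{C}$ is an arbitrary closed $L$-structure with a homomorphism-embedding into some $\str{C}_0\in\mathcal R=\overrightarrow{\Str}(L)$, and members of $\mathcal R$ need \emph{not} satisfy the four axioms of $\mathcal{LEQ}_k$. In particular $\str{C}$ may violate the symmetry axiom: $\func{C}{}(\vec{t})=\{v\}$ and $\func{C}{}(\vec{t}_2)=\{v'\}$ with $v\neq v'$ for a reordering $\vec{t}_2$ of $\vec{t}$. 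Your appeal to the local completability hypothesis does not exclude this: the offending configuration is \emph{reducible} (a free amalgam of the two closure atoms over the shared $k$-set, provided no relation covers the pair $v,v'$), so it does have a $\K$-completion --- obtained by \emph{gluing} $v$ and $v'$ --- and hence passes condition (b) for any $n$. Since your completing map is the identity on old vertices and never glues, your $\str{C}'$ inherits the symmetry violation and fails to lie in $\K$. A secondary omission: you never complete the (in general only partial, acyclic) relation $\leq_{\str{C}}$ to a linear order, which is needed both for $\str{C}'\in\mathcal R$ and for the irreducibility that the definition of completion demands; the paper does this explicitly, noting that acyclicity follows from the homomorphism-embedding into the ordered $\str{C}_0$.

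The paper's proof is simpler precisely because completion with respect to copies of $\str{B}$ only requires the map to be an embedding on each $\widetilde{\str{B}}\in{\str{C}\choose\str{B}}$: it takes $n(\str{B},\str{C}_0)=0$, completes the order, and \emph{deletes} every $F$-value whose domain tuple and image are not contained in a single copy of $\str{B}$, adding nothing. Since each copy of $\str{B}$ is a closed embedded substructure satisfying the axioms, the retained values are automatically consistent (even across overlapping copies, by closedness), all copies of $\str{B}$ are preserved, and no fresh labels or local-completability hypothesis are needed. Your fresh-label idea also risks breaking the embedding condition on copies of $\str{B}$ from the other side: an embedding requires $\vec{t}\in\dom(\func{B}{})$ \emph{if and only if} the image tuple is in the domain in $\str{C}'$, so filling a hole that happens to lie inside a copy would destroy that copy unless you verify (as you do not) that no hole meets a copy. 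To repair your argument, replace ``keep everything and fill holes'' by the paper's restriction of $\func{C}{}$ to tuples covered by copies of $\str{B}$, and add the order-completion step; then $n=0$ works and the rest of your verification (heredity, strong amalgamation) goes through essentially as the paper's.
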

\begin{proof}
We verify individual conditions of Definition~\ref{def:multiamalgamation} and
show that $\mathcal{LEQ}_k$ is a $\mathcal R$-multiamalgamation class.  $\mathcal{LEQ}_k$
is clearly hereditary and closed for amalgamation (which is free for $\func{}{}$ and where $\leq_\str{A}$ is completed to a linear order). The only non-trivial condition
is the completion property.

Given $\str{A},\str{B}\in \mathcal{LEQ}_k$ and $\str{C}_0\in \mathcal R$ put $n(\str{B},\str{C}_0)=0$.
For $\str{C}\longrightarrow (\str{B})^\str{A}_2$ with a homomorphism-embedding to $\str{C}_0$.
construct $\str{C}'$ on vertex set $C$ by completing $\leq_{\str{C}}$ to a linear
order $\leq_{\str{C}'}$ (by an existence of homomorphism-embedding from $\str{C}$ to ordered $\str{C}_0$ this is always possible because $\leq_\str{C}$ must be acyclic) and define $\func{C'}{}(\vec{t})=\{v\}$ if and only if  $\func{C}{}(\vec t)=\{v\}$
and  all vertices of $\vec{t}$ as well as $v$ are part of one copy of $\str{B}$ in $\str{C}$.
It is easy to check $\str{C}'$ is ordered $L$-model (and thus $\str{C}'\in \mathcal R$), all three axioms of $\mathcal{LEQ}_k$ are satisfied (and thus $\str{C}'\in \mathcal{LEQ}_k$), all
copies of $\str{B}$ in $\str{C}$ are preserved and consequently the identity is $\K$-completion of $\str{C}$ to $\str{C}'\in \mathcal{LEQ}_k$ with respect to copies of $\str{B}$.
\end{proof}

\begin{remark}
Theorem \ref{equivalence} is a strong result. For example already for 
finitely many labels it is close to the Ramsey theorem for relational structures \cite{Nevsetvril1976}: The labelling function distinguishes between edges and non-edges. For relational structures with more 
relations we can always, for each arity,  replace the overlap structure by disjoint types of edges thus forming an equivalence.

On the other hand the labelling may be interpreted as a set of imaginaries which is in line with some standard model theoretic constructions,  see \cite{Hubicka2016}.

In \cite{Ivanov2015} it is proved that the automorphism group of the \Fraisse{} limit of the class $\mathcal{EQ}_k$ is amenable group (for every $k$). The proof is via (Hrushovski) extension property for partial automorphisms (EPPA). It is suggested in \cite{Ivanov2015} that a related group   
does not contain an $\omega$-categorical extremely amenable subgroup. This is not the case. This fact is a consequence of our Theorem \ref{equivalence} using Kechris, Pestov, Todor\v cevi\' c correspondence \cite{Kechris2005,The2013}. (Another examples of  $\omega$-categorical groups not containing $\omega$-categorical amenable or extremely amenable subgroup are given in \cite{Hubicka2016}.)
\end{remark}

\section{Steiner Systems}
\label{sec:steiner}
For fixed integers $k > t \geq 2$ a \emph{partial $(k,t)$-Steiner system}
is a $k$-uniform hypergraph $G = (V,E)$ which satisfies that every $t$-element subset of $V$ is contained in at most one edge $e \in E$.
Denote by ${\mathcal S}_{k,t}$ the class of all partial $(k,t)$-Steiner systems and, as usual, 
$\vec{\mathcal{S}}_{k,t}$ the class of linearly ordered partial $(k,t)$-Steiner systems. The class $\mathcal{S}_{k,t}$ is considered with embeddings  and the class 
$\vec{\mathcal{S}}_{k,t}$ with monotone embeddings.

It is easy to see that neither $\mathcal{S}_{k,t}$ nor $\vec{\mathcal{{S}}}_{k,t}$ is a Ramsey class.
For example we can distinguish pairs of vertices by the fact whether they belong to a hyperedge or not. However this is basically the only obstacle as we can
show by an application of Theorem \ref{thm:mainstrongclosures}.

Consider the language $L=\{\leq,\func{}{}\}$ containing one binary relational symbol $\leq$ and one function symbol $\func{}{}$ of domain arity $t$ and range arity $k$.
Let $\mathcal R$ be the class of all finite ordered $L$-structures. $\mathcal R$ is a Ramsey class by Theorem~\ref{thm:models2}. It is perhaps surprising that within this class one can axiomatise $\vec {\mathcal S}_{k,t}$.
 Consider the  following class $\K$ of structures
$\str{A} = (A,\leq_\str{A},\func{A}{})$  where:

\begin{enumerate}
 \item $A$ is a linearly ordered set by $\leq_\str{A}$;
 \item $\func{A}{}$ is a partial function from $A^t$ to  $A\choose k$;
  \item \label{cond:hvezdicka}every $\vec{t}\in \dom(\func{A}{})$ has no repeated vertices;
 \item \label{cond:hvezdicka2}for every $\vec{t}\in \dom(\func{A}{})$ it holds that every vertex of $\vec{t}$ is in $\func{A}{}(\vec{t})$ and
every $r$-tuple $\vec{t}_2$ of distinct vertices of $\func{A}{}(\vec{t})$ is in $\dom(\func{A}{})$ and $\func{A}{}({\vec{t}})=\func{A}{}(\vec{t}_2)$.
\end{enumerate} 
The embeddings and subobjects for structures in $\mathcal{K}$ are defined as in Section~\ref{intro}.

Let us formulate explicitely what is the meaning of embeddings in this case:
According to the above definitions a monotone injection $f: A \to B$ is an embedding if and only if $\func{B}{}((f(x_1),f(x_2),\allowbreak \ldots,\allowbreak f(x_t))) = f(\func{A}{}(x_1,x_2,\ldots, x_t))$  whenever one of the sides of this equation makes sense. This then translates to the following notion of strong embedding:
For partial linearly ordered $(k,t)$-Steiner systems $G = (V,E)$ and $G' = (V',E')$ a monotone injective mapping $h: V \to V'$ is a \emph{strong embedding} if $h$ is a monotone embedding (as in relational structures, i.e. in this case hypergraphs) of $G$ into $G'$ and with the additional property that no $t$-tuple  
of $V$ belongs to a $k$-tuple of $G'$ outside of $G$.
Strong embeddings and strong subobjects correspond exactly to embeddings and (closed) substructures of the above structures $\str{A} = (A,\leq_\str{A},\func{A}{})$. Denote (for a second) by $\mathcal R$ the class of all such finite structures $\str{A}$ where $\leq_\str{A}$ is a linear ordering of vertices. 

It is clear that some structures of $\mathcal R$ correspond to partial $(k,t)$-Steiner systems
with a linear ordering of its vertices. The function $F$ assigns to a given $t$-tuple of distinct elements the unique set of $k$ elements containing it, providing such a set exists. Thus the range of the function $F$ is the set of edges of a partial $(k,t)$-Steiner system.
However not all structures in $\mathcal{R}$ correspond to $\vec{\mathcal{S}}_{k,t}$ because they may not satisfy conditions~\ref{cond:hvezdicka} and~\ref{cond:hvezdicka2} above.
Conditions~\ref{cond:hvezdicka} and~\ref{cond:hvezdicka2} are  clearly local
and
induces a subclass $\mathcal{K}$
of $\mathcal{R}$ which can be clearly identified with  $\vec{\mathcal{S}}_{k,t}$.
Consequently, by application of Theorem \ref{thm:mainstrongclosures} we get:


\begin{thm}
The class $\mathcal{K}$ (and consequently  $\vec{\mathcal{S}}_{k,t}$ with strong embeddings)
is a Ramsey class.
\end{thm}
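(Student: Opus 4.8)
The plan is to show that $\mathcal{K}$ is an $\mathcal{R}$-multiamalgamation class in the sense of Definition~\ref{def:multiamalgamation} and then to invoke Theorem~\ref{thm:mainstrongclosures}. Before checking the three conditions, note that the ambient class $\mathcal{R}=\overrightarrow{\Str}(L)$ indeed consists of \emph{irreducible} structures: since $\leq_\str{A}$ is a total linear order, for any two distinct vertices $u,v$ one of the pairs $(u,v),(v,u)$ is a tuple of the binary relation $\leq$. Hence any bipartition of the vertex set into two nonempty private parts is crossed by a tuple of $\leq$, so no structure of $\mathcal{R}$ is a nontrivial free amalgam. By Theorem~\ref{thm:models2}, $\mathcal{R}$ is therefore a Ramsey class of finite irreducible $L$-structures, eligible as the ambient class of Definition~\ref{def:multiamalgamation}.

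The hereditary property is immediate, as conditions~\ref{cond:hvezdicka} and~\ref{cond:hvezdicka2} defining $\mathcal{K}$ are preserved under passing to substructures. For the strong amalgamation property I would take $\str{C}$ to be the free amalgamation of $\str{B}_1$ and $\str{B}_2$ over $\str{A}$ (with $\func{}{}$ amalgamated freely and the resulting partial order completed to a linear one). The only thing to verify is that this free amalgam still lies in $\mathcal{K}$, i.e.\ that no $t$-tuple is forced into two distinct blocks. The key observation is that a block $\func{B_i}{}(\vec{t})$ either has a $t$-subset inside $\str{A}$---in which case, by closure condition~\ref{cond:hvezdicka2} applied already in $\str{A}$, the whole block lies in $\str{A}$ and is therefore shared consistently---or else it meets $A$ in fewer than $t$ vertices and is private to one side. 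Two private blocks from opposite sides can overlap only inside $A$, hence in fewer than $t$ vertices, so they share no $t$-subset; and no $t$-tuple crossing the two private parts lies in $\dom(\func{}{})$ at all. Thus the Steiner conditions survive and the free amalgam is a strong amalgam in $\mathcal{K}$.

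The main work is the locally finite completion property, where I would mimic the proof of Theorem~\ref{equivalence} and take $n(\str{B},\str{C}_0)=0$. Given a closed $L$-structure $\str{C}$ with a homomorphism-embedding into the ordered $\str{C}_0\in\mathcal{R}$, I would build $\str{C}'$ on the vertex set $C$ by first completing $\leq_\str{C}$ to a linear order (possible because a homomorphism-embedding into the linearly ordered $\str{C}_0$ forces $\leq_\str{C}$ to be acyclic), and then setting $\func{C'}{}(\vec{t})=\func{C}{}(\vec{t})$ precisely when $\func{C}{}(\vec{t})$ is defined and $\vec{t}$ together with $\func{C}{}(\vec{t})$ lies inside a single copy of $\str{B}$ in $\str{C}$, leaving $\func{C'}{}(\vec{t})$ undefined otherwise. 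Because $\func{C}{}$ is a genuine single-valued partial function, this prescription is unambiguous even for tuples lying in several copies of $\str{B}$, and every retained block sits inside a copy of $\str{B}$, which is itself a genuine partial Steiner system; hence conditions~\ref{cond:hvezdicka} and~\ref{cond:hvezdicka2} hold in $\str{C}'$ and $\str{C}'\in\mathcal{K}$. Since $\leq_{\str{C}'}$ is total, $\str{C}'$ is irreducible, and since $\func{C'}{}$ agrees with $\func{C}{}$ on each copy of $\str{B}$, the identity is a $\mathcal{K}$-completion of $\str{C}$ with respect to copies of $\str{B}$.

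The step I expect to be the genuine obstacle is confirming that the retained function values never collide, i.e.\ that a $t$-tuple shared by two copies of $\str{B}$, or a $t$-subset of a retained block, cannot be forced into two different $k$-blocks. This is exactly where single-valuedness of $\func{C}{}$ together with the closure condition~\ref{cond:hvezdicka2} does the work; it is also the point at which one must remember that $\str{C}$ itself need not lie in $\mathcal{K}$, so that only its copies of $\str{B}$ may be assumed to be genuine Steiner systems. Once this consistency is settled, Theorem~\ref{thm:mainstrongclosures} applies and yields that $\mathcal{K}$---and consequently $\vec{\mathcal{S}}_{k,t}$ with strong embeddings---is a Ramsey class.
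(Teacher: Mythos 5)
Your proposal is correct and takes essentially the same route as the paper: both verify the multiamalgamation conditions with $n(\str{B},\str{C}_0)=0$ and complete $\str{C}$ by linearising $\leq_{\str{C}}$ and retaining $\func{C}{}(\vec{t})$ exactly when $\vec{t}\cup\func{C}{}(\vec{t})$ lies inside a single copy of $\str{B}$. The irreducibility, amalgamation and single-valuedness checks you spell out are precisely the details the paper leaves implicit in its appeal to the proof of Theorem~\ref{equivalence}.
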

\begin{proof}
In analogy to the proof of Theorem~\ref{equivalence} we can show that $\mathcal K$
is an $\mathcal R$-multiamalgamation class. The completion property follows similarly, too:
Given $\str{A},\str{B}\in \mathcal K$ and $\str{C}_0\in \mathcal R$ put $n(\str{B},\str{C}_0)=0$.
For $\str{C}\longrightarrow (\str{B})^\str{A}_2$ with a homomorphism-embedding to $\str{C}_0$
construct $\str{C}'$ on vertex set $C$ by completing $\leq_{\str{C}}$ to a linear
order $\leq_{\str{C}'}$ and putting $\func{C'}{}(\vec{t})=E$ if and only if $\func{C}{}(\vec t)=E$ and  $\vec{t}\cup E\subseteq \widetilde{B}$ for some $\widetilde{\str{B}}\in {\str{C}\choose \str{B}}$.
\end{proof}

This Theorem was proved in  \cite{bhat2016ramsey} by an explicit construction.
A different proof (not using Theorem \ref{thm:mainstrongclosures}) is given in \cite{Evans3}.

Note also that  Steiner systems (such as Steiner Triple Systems) correspond to
those structures $\str{A} = (A,\leq_\str{A},\func{A}{})\in \K$ where $\func{A}{}$ is a total (not partial) function and, equivalently, to the irreducible structures in $\mathcal{K}$.
It is a nontrivial fact that non-trivial Steiner systems exist for all values
$(k,t)$ and that any partial  $(k,t)$-Steiner systems can be extended to a (full)
Steiner system, see \cite{Keevash2014,wilsonI,wilsonII,wilsonIII}.
But this fact together with Theorem \ref{equivalence} implies that ordered $(k,t)$-Steiner systems form a Ramsey class.
Note also that the first non-trivial Ramsey property for Steiner systems was 
proved in \cite{Nevsetvril1987}.

\section{Degenerate Graphs and Graphs with $k$-ori\-en\-ta\-tions (two views)}
\label{sec:orientedgraphs}
Here we give yet another interpretation of Ramsey results for finite models.
Given a positive integer $k$, a {\em $k$-orientation} is an oriented graph such that the
out-degree of
every vertex is at most $k$.  (Thus a particular case of such graphs are degenerate graphs or graphs with maximal average degree bounded by $k$; see \cite{Nevsetvril2012} for a comprehensive study of such ``sparse'' classes.)
A $k$-orientation $G_1=(V_1,E_1)$ is {\em successor closed} in a
$k$-orientation $G_2=(V_2,E_2)$ if
$G_1$ is a subgraph of $G_2$ and moreover there is no edge oriented from
$V_1$ to $V_2\setminus V_1$.

Denote by $\mathcal D_k$ the class of all finite $k$-orientations.
This is
a hereditary class closed for free amalgamation over
successor-closed subgraphs
and thus the successor-closeness plays the role of embeddings and strong
substructures.

We consider a language $L=\{\func{}{1},\func{}{2},\ldots,\func{}{k}\}$ consisting
of function $\func{}{i}$ of domain arity 1 and range arity $i$, for $1\leq i\leq k$.
Given an oriented graph $G=(V,E)\in \mathcal D_k$ denote by
$\str{G}^+$ the structure with vertex set
$V$ and unary functions $\func{}{1},\func{}{2},\ldots, \func{}{k}$.
Function $\func{}{i}$, $1\leq i\leq k$,
is defined for every vertex of outdegree $i$ and maps the vertex to
all vertices in its out-neighbourhood.
Denote by $\mathcal D^+_k$ the class of all structures $\str{G}^+$
for $G\in \mathcal D_k$.
Because $\str{G}^+_1$ is a substructure of $\str{G}^+_2$ if and
only if $G_1$ is successor
closed in $G_2$ it follows that $\mathcal D^+_k$ is a free
amalgamation class. 

Denote by $\overrightarrow{\mathcal D}^+_k$ a class
of all structures $\overrightarrow{\str{A}}=(A,\leq_\str{A},\func{A}{1},\func{A}{2},\ldots, \func{A}{k})$
such that $\str{A}=(A,\leq_\str{A},\func{A}{1},\func{A}{2},\ldots, \func{A}{k})\in \mathcal D_k$ and
$\leq_\str{A}$ is a linear order of $A$.
We immediately get:
\begin{thm}
Class
$\overrightarrow{\mathcal D}^+_k$ is a Ramsey class.
\end{thm}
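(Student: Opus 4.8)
The plan is to mirror the structure of the proofs of Theorem~\ref{equivalence} and the Steiner systems theorem, showing that $\overrightarrow{\mathcal D}^+_k$ is an $\mathcal R$-multiamalgamation class and then invoking Theorem~\ref{thm:mainstrongclosures}. Here $\mathcal R$ is the class of all finite ordered $L$-structures for the language $L=\{\leq,\func{}{1},\ldots,\func{}{k}\}$, which is Ramsey by Theorem~\ref{thm:models2}. First I would verify the hereditary property: a substructure of some $\overrightarrow{\str{G}}^+$ corresponds to a successor-closed subgraph of a $k$-orientation, which is again a $k$-orientation, so the class is clearly closed under taking substructures. The joint embedding and strong amalgamation properties should also be routine, since the excerpt already notes that $\mathcal D^+_k$ is a free amalgamation class (free amalgamation over successor-closed subgraphs is strong), and the order $\leq$ can always be completed to a linear order on the amalgam because the union of two linear orders agreeing on the common part is acyclic.

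The heart of the argument, as in the previous two theorems, is the \emph{locally finite completion property}, and I expect this to be the main obstacle. I would again try the strongest possible choice $n(\str{B},\str{C}_0)=0$: given $\str{C}\in\mathcal R$ with a homomorphism-embedding into some $\str{C}_0$, I would build $\str{C}'$ on the same vertex set $C$ by first completing $\leq_{\str{C}}$ to a linear order (possible because the homomorphism-embedding into an ordered $\str{C}_0$ forces $\leq_{\str{C}}$ to be acyclic), and then defining the functions $\func{C'}{i}$ to agree with $\func{C}{i}$ \emph{only} on those tuples all of whose vertices (source and all images) lie together inside a single copy $\widetilde{\str{B}}\in{\str{C}\choose\str{B}}$, discarding the rest. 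The key point to check is that $\str{C}'$ so defined actually lands in $\overrightarrow{\mathcal D}^+_k$, i.e. genuinely is a $k$-orientation: each vertex has at most one function defined on it with range size its out-degree, and discarding data can only lower out-degrees, so no vertex acquires out-degree exceeding $k$.

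The subtlety specific to this class, which does not arise for Steiner systems or labelled equivalences, is that the functions $\func{}{1},\ldots,\func{}{k}$ are \emph{not independent}: a single vertex may carry at most one of them, and the range of $\func{}{i}$ must have exactly $i$ elements. I would therefore need to confirm that the local axioms defining $\overrightarrow{\mathcal D}^+_k$ inside $\mathcal R$ — that at most one $\func{}{i}$ is defined on any given vertex, that $|\func{}{i}(x)|=i$, that the out-neighbours are distinct from $x$ and from each other — are all preserved under the truncation to copies of $\str{B}$. Preservation of copies is immediate since within any $\widetilde{\str{B}}$ all the original function values are retained, so each $\widetilde{\str{B}}$ embeds into $\str{C}'$ exactly as before; and each local axiom involves only a single vertex together with its image set, hence is checkable within one copy, so truncation cannot create a violation. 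This yields that the identity map is a $\K$-completion of $\str{C}$ to $\str{C}'\in\overrightarrow{\mathcal D}^+_k$ with respect to copies of $\str{B}$, completing the verification and hence, by Theorem~\ref{thm:mainstrongclosures}, the proof.
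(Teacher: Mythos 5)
Your proof is correct and follows essentially the same route as the paper: the paper obtains the theorem ``immediately'' from the observation that $\mathcal D^+_k$ is a free amalgamation class (calling this the most elementary use of Theorem~\ref{thm:mainstrongclosures}), and your explicit verification of the $\mathcal R$-multiamalgamation conditions --- hereditariness, free strong amalgamation with the order completed, and the completion property with $n(\str{B},\str{C}_0)=0$ via order-completion and truncation of function values to copies of $\str{B}$ --- is exactly the routine of the proofs of Theorem~\ref{equivalence} and the Steiner-system theorem that the paper leaves implicit here. Your key point is also sound for the right reason: copies of $\str{B}$ are \emph{closed} substructures, so any local violation touching a vertex that lies in some copy (two functions defined at one vertex, a loop in the out-neighbourhood) would already be present inside that copy, contradicting $\str{B}\in\K$; hence no violation can survive the truncation.
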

This can be seen as the most elementary use of
Theorems~\ref{thm:mainstrongclosures}. But there is more to this than meets the eye: The classes $\mathcal D_k$ and $\overrightarrow{\mathcal D}^+_k$ are building blocks for the analysis of the Hrushovski construction in the context of structural Ramsey theory \cite{Evans2}. At the end these examples 
provide the
first example of an $\omega$-categorical structure without a precompact Ramsey lift \cite{Evans2}.

It is interesting that the class $\mathcal D_k$ allows a different  lift (or expansion) which makes it also a Ramsey class. 
Let $G = (V,E)$ be a $k$-oriented graph.  For every  vertex $v$ let us fix some
arbitrary enumeration of arcs starting from $v$: $e_1(v),e_2(v),\ldots,e_{k'}(v)$ for $k' \leq k$.
The the set $E$ is decomposed into sets $E_1, E_2,\ldots, E_k$ where each set $E_i$ is an oriented graph with outdegrees $\leq 1$ and thus each $E_i$  can be viewed as a  mapping $F_i$ (by adding idempotent elements at vertices where there are no outgoing arcs).

Thus we get the  structures with language $L = \{\func{}{1},\func{}{2},\ldots,\func{}{k}\}$ and $L$-models $\str{A}$ representing $(A,\func{A}{1},\func{A}{2},\ldots,\func{A}{k})$ where $\func{A}{i}$ are functions $A \to A$.
In this situation we can apply Theorem \ref{thm:models2} directly. The additional condition is that 
the functions $A \to A$
 have (except for idempotents) disjoint sets of arcs. However this is clearly a local condition.
  Denote by $\mathcal{F}_L$ the class of all such $L$-models together with embeddings (defined in Section \ref{intro}). It is clear that the class $\mathcal{F}_L$ is locally finite and that it has free amalgamation. Again denote by $\overrightarrow{\mathcal F}_L$ the class of all structures $\overrightarrow{\str{A}}$ extending structures $\str{A}\in \mathcal{F}_L$ by binary relation $\leq_\str{A}$ defining a linear ordering of vertices.
Thus we have:

\begin{thm}
\label{korient}
The class $\overrightarrow{\mathcal{F}}_L$ is a Ramsey class.
In the other words: The class of all ordered oriented graphs which are decomposable into
$k$
partial  mappings is a Ramsey class. 
\end{thm}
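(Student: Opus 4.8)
The plan is to present $\overrightarrow{\mathcal{F}}_L$ as an $\mathcal R$-multiamalgamation class and to conclude by Theorem~\ref{thm:mainstrongclosures}, exactly along the lines of Theorem~\ref{equivalence}. I take $\mathcal R=\overrightarrow{\Str}(L)$, which is Ramsey by Theorem~\ref{thm:models2}; note that since $\leq$ is a linear order, every structure of $\mathcal R$ is irreducible (any two vertices lie in a common tuple of $\leq$, so no proper free decomposition is possible), and hence $\mathcal R$ is a legitimate ambient class of irreducible structures. The class $\overrightarrow{\mathcal{F}}_L$ is the subclass of $\mathcal R$ cut out by the local requirements that each $\func{}{i}$ be a (idempotent-completed, hence total) unary function and that the non-idempotent arcs be pairwise disjoint, i.e. for every vertex $v$ and every $i\neq j$ with $\func{A}{i}(v)\neq v\neq \func{A}{j}(v)$ one has $\func{A}{i}(v)\neq\func{A}{j}(v)$. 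Each such requirement constrains only a vertex together with its at most $k$ images, so it is local in the sense of Definition~\ref{def:multiamalgamation}.

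First I would dispatch the hereditary property, which is clear since deleting vertices preserves all the displayed conditions. For the strong amalgamation property I would take the free amalgamation of $\str{B}_1$ and $\str{B}_2$ over $\str{A}$: because the $\func{}{i}$ are total and unary, every vertex of $B_1\setminus A$ (resp. $B_2\setminus A$) keeps precisely the images it already had in $\str{B}_1$ (resp. $\str{B}_2$), so no function value crosses the two sides, the overlap is exactly $\str{A}$, and the disjointness condition at each vertex is inherited from the side that vertex came from. Completing the (acyclic) union order to any linear order then lands the amalgam inside $\overrightarrow{\mathcal{F}}_L$.

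The heart of the argument is the locally finite completion property, and here I expect to set $n(\str{B},\str{C}_0)=0$ as in the previous proofs. Given $\str{C}$ with a homomorphism-embedding to $\str{C}_0$, I would build $\str{C}'$ on the vertex set $C$ by completing $\leq_\str{C}$ to a linear order $\leq_{\str{C}'}$ (possible because $\leq_\str{C}$ embeds into the linearly ordered $\str{C}_0$ and is therefore acyclic), and by setting $\func{C'}{i}(v)=\func{C}{i}(v)$ exactly when $v$ lies in some copy $\widetilde{\str B}\in{\str{C}\choose\str{B}}$, and $\func{C'}{i}(v)=v$ otherwise. The point is that a copy of $\str{B}$, being an embedded substructure with totally defined functions, automatically contains $\func{C}{i}(v)$ together with $v$, so retaining arcs only inside copies of $\str{B}$ preserves every copy of $\str{B}$ intact; hence the identity is a completion of $\str{C}$ with respect to copies of $\str{B}$.

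The one genuinely interesting verification, which I regard as the main obstacle, is that $\str{C}'$ really satisfies the disjointness axiom, i.e. that $\str{C}'\in\overrightarrow{\mathcal{F}}_L$. This is where the restriction to copies of $\str{B}$ does the work: if two arcs coincided, say $\func{C}{i}(v)=\func{C}{j}(v)=w$ with $i\neq j$ and $w\neq v$, then any copy $\widetilde{\str B}$ containing $v$ would, by totality and the embedding condition, also contain $w$ and would therefore record both $\func{}{i}(v)=w$ and $\func{}{j}(v)=w$, contradicting $\str{B}\in\overrightarrow{\mathcal{F}}_L$; so no copy contains $v$ and both arcs are discarded. Thus a pair of coincident non-idempotent arcs can never survive, and the surviving arcs automatically obey disjointness. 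With the three conditions of Definition~\ref{def:multiamalgamation} in hand, Theorem~\ref{thm:mainstrongclosures} yields that $\overrightarrow{\mathcal{F}}_L$ is Ramsey.
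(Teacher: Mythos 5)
Your proposal is correct and follows essentially the same route as the paper, which only sketches the argument: realise $\overrightarrow{\mathcal{F}}_L$ as a subclass of the Ramsey class $\overrightarrow{\Str}(L)$ cut out by the local disjointness condition, note free amalgamation and local finiteness, and invoke Theorem~\ref{thm:mainstrongclosures}. Your detailed verification of the completion property with $n(\str{B},\str{C}_0)=0$ --- retaining arcs only inside copies of $\str{B}$ and filling the rest with idempotents, with the observation that a coincident pair of non-idempotent arcs can never lie in a copy of $\str{B}$ --- is exactly the argument the paper leaves implicit (in analogy with its proof of Theorem~\ref{equivalence}).
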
 

By different more elaborate methods this has been proved in \cite{Sokic2016}.
An even simpler direct proof is given in \cite{Hubicka2016}.
Note that the closed sets in classes $\overrightarrow{\mathcal{F}}_L$ and $\overrightarrow{\mathcal D}^+_k$ (which are ``expansions'' of the same class $\mathcal D_k$) are in  a correspondence. However the objects differ in their ``labelling''.
The formulation of a Ramsey expansion  by means of decomposition will be further pursued in the next section.

\section{Resolvable Block Designs}
\label{resolvable}

We can combine the  results of previous two sections. Here is a particular combinatorially interesting example (with illustrious history \cite{wilsonI,wilsonII,wilsonIII,ray1971,NN1971}).

Let $k > 2$. A \emph{Balanced Incomplete Block Design} (shortly BIBD) with parameters $(v,k,1)$ is a $k$-uniform hypergraph $(X,\mathcal{B})$ where $\mathcal{B}$ is a system of $k$-element subsets of $X, |X| = v,$ such that every two element subset of $X$ is contained in exactly one  subset (called \emph{block}) in $\mathcal{B}$. 
(In fact this is the same definition as Steiner systems: our BIBD are $(v,k,1)$-BIBD. We have chosen this term here as it is more standard in this area of resolvability. For $\lambda > 1$ it is possible to generalise our argument. For the sake of brevity we do not do so in this paper.)
A partial incomplete balanced block design (PBIBD) contains every pair of elements of $X$ in at most one block. A  {\em Resolvable Balanced Incomplete Block Design} (shortly {\em RBIBD}) is block design with the additional property that $\mathcal{B}$ is partitioned into equivalences on $X$ with block as  equivalence classes (thus $|X|$ has to be divisible by $k$). In this case we denote blocks of these equivalences as 
$\mathcal{B}_1,\mathcal{B}_2,\ldots,\mathcal{B}_m$ (where of course $t = (|X|-1)/(k-1))$. A \emph{partial} RBIBD (shortly PRBIBD) is just a partition of blocks into equivalences (not necessarily covering the whole $X$).

If $(X,\mathcal{B}) = (X,\mathcal{B}_1,\mathcal{B}_2,\ldots,\mathcal{B}_m)$ and $(X', \mathcal{B'}) = (X',\mathcal{B}_1,\mathcal{B}'_2,\ldots,\mathcal{B}'_{m'})$  are two (partial) RBIBD
then isomorphism, embedding, substructure are defined as strong mappings (Section~\ref{sec:steiner}) and moreover preserving equivalence (i.e. partition into sets $\mathcal{B}_1,\mathcal{B}_2,\ldots,\mathcal{B}_m$).

Denote by $\mathcal{PRBIBD}$ the class of all  partial resolvable BIBD and all their embeddings and
denote by $\overrightarrow{\mathcal{PRBIBD}}$ the class of all linearly ordered PRBIBD and all their embeddings.
One can prove that $\overrightarrow{\mathcal{PRBIBD}}$ is a $\mathcal R$-multiamalgamation class (such complicated abbreviations seem to by typical for this area). In fact this is a combination of  Sections \ref{sec:steiner} and Section \ref{sec:orientedgraphs}.  This can be outlined as follows:

The class $\overrightarrow{\mathcal{PRBIBD}}$ may be interpreted in our scheme (in order to apply Theorem \ref{thm:mainstrongclosures}) as follows.
The language is $L = \{\leq,S, \func{}{1},\func{}{2}\}$  with arities
$d(\func{}{1}) = 2, r(\func{}{1}) =k, d(\func{}{2}) = k,r(\func{}{2}) = 1, a(R^S) = 1$. We consider the following class $\mathcal R$ of $L$-structures 
$\str{A} = (A,\leq_\str{A},S_\str{A},\func{A}{1},\func{A}{2})$:

\begin{enumerate}
 
 \item $\func{A}{1}$ is a partial function from $A\choose 2$ to  $A\choose k$ (function $\func{A}{1}$ will represent the blocks of $\str A$);
 \item $\func{A}{2}$ is a partial function from $A\choose k$ to  $A$ (function $\func{A}{2}$ will represent the equivalences).
\end{enumerate}

(We want the functions $\func{A}{1}$ and $\func{A}{2}$ to be on tuples of distinct elements and moreover symmetric. We indicated this by choosing the domains  $A\choose 2$ and   $A\choose k$.)
The embeddings are defined in the usual way. Essentially these are strong maps (see Section \ref{sec:steiner}) which preserve partition of blocks into equivalences.)
Clearly, again by Theorem \ref{thm:models2}, $\mathcal{R}$ is a Ramsey class.

However our class $\overrightarrow{\mathcal{PRBIBD}}$  is the class of structures which satisfies a few more axioms which relate relations and functions together. They can be 
formulated as follows (for brevity we put $S = \{x, (x) \in S\}$:

\begin{enumerate}
\item $\leq_\str{A}$ is a linear ordering of $A$;
\item for every $v\in \vec{t}\in \dom(\func{A}{2})$ it holds that $v\notin S$;
\item $\left (\bigcup \mathrm{Rg}(\func{A}{2})\right) \subseteq S$;
  \item for every $\{x,y\} \in \dom(\func{A}{1})$ it holds that both $x$ and $y$ are in $\func{A}{1}(\vec{t})$;
\item for every $\{x,y\} \in \dom(\func{A}{1})$ it holds that
every pair $x',y'$ of distinct vertices of $\func{A}{1}(x,y)$ is in $\dom(\func{A}{1})$ and $\func{A}{1}(x',y')=\func{A}{1}(x,y)$;
\item the domain of $\func{A}{2}$ is equal to the range of $\func{A}{1}$ (i.e. $\dom(\func{A}{2})$ are all blocks);
\item if $\func{A}{2}(K) =\func{A}{2}(K')$ then $K$  and $K'$  are disjoint sets.
\end{enumerate}

Denote by $\mathcal{K}$ the subclass of $\mathcal{R}$ of all $L$-structures with these properties. Observe that these are all local conditions and
one can prove that with this   the class $\mathcal{K}$ is an $\mathcal{R}$-multiamalgamation class and thus 
Theorem \ref{thm:mainstrongclosures} can be applied. Moreover one sees easily that this axiomatisation describes the class $\overrightarrow{\mathcal{PRBIBD}}$.
As a consequence we have the following 

\begin{thm}
\label{RSD}
The classes $\mathcal{K}$ and $\overrightarrow{\mathcal{PRBIBD}}$ are Ramsey classes.
\end{thm}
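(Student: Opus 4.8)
The plan is to verify the three conditions of Definition~\ref{def:multiamalgamation}, showing that $\mathcal{K}$ is an $\mathcal{R}$-multiamalgamation class and then invoking Theorem~\ref{thm:mainstrongclosures}; the identification of $\mathcal{K}$ with $\overrightarrow{\mathcal{PRBIBD}}$ has already been established by the axioms above, so the two statements are equivalent. The hereditary property is immediate: all seven axioms are universal (closed under taking substructures), since they assert nonexistence or symmetry/closure conditions on tuples, and deleting vertices cannot violate them. This follows exactly the pattern of the proofs of Theorem~\ref{equivalence} and the Steiner system theorem.

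For the strong amalgamation property, I would take the free amalgamation of $\str{B}_1$ and $\str{B}_2$ over $\str{A}$ (free for both $\func{}{1}$ and $\func{}{2}$, with $S$ the union of the two copies and $\leq$ completed to a linear order) and argue it already lies in $\mathcal{K}$. The point is that the closure conditions here are \emph{completely determined within each block}: by axiom (5) every block is closed under $\func{}{1}$, and by axiom (6) the domain of $\func{}{2}$ is exactly the set of blocks. Since a block is a $k$-element set fixed by its defining pair, and free amalgamation creates no new tuples spanning both sides, every block of the amalgam lies entirely inside $\str{B}_1$ or inside $\str{B}_2$. Hence all closure and disjointness axioms are inherited from the factors, and the free amalgam is strong and in $\mathcal{K}$.

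The main work, as in the previous sections, is the locally finite completion property, and here is where this theorem genuinely combines Sections~\ref{sec:steiner} and~\ref{sec:orientedgraphs} rather than merely repeating one of them. I would again try to set $n(\str{B},\str{C}_0)$ to a small constant and, given $\str{C}\longrightarrow(\str{B})^{\str{A}}_2$ with a homomorphism-embedding to $\str{C}_0$, build $\str{C}'$ on vertex set $C$: complete $\leq_{\str{C}}$ to a linear order, and retain a value $\func{C'}{1}(\{x,y\})$ or $\func{C'}{2}(K)$ precisely when the defining tuple together with its image lies inside a single copy of $\str{B}$ in $\str{C}$. The subtlety absent from the pure Steiner case is the interaction of the two functions: a completion must simultaneously keep each block internally consistent (the $\func{}{1}$/Steiner part) \emph{and} keep the partition of blocks into equivalence classes under $\func{}{2}$ consistent (the orientation/equivalence part of Section~\ref{sec:orientedgraphs}). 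The hard part will be checking that restricting to copies of $\str{B}$ does not accidentally merge two blocks into one equivalence class in violation of axiom (7), or leave a block in the domain of $\func{}{2}$ without a well-defined class; one must verify that the disjointness condition (7) and the ``domain $=$ range'' condition (6) survive the filtering, using that $\str{C}_0$ itself witnesses a global consistent completion and that every relevant configuration is small enough to appear inside one copy of $\str{B}$.

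Once these three conditions are verified, Theorem~\ref{thm:mainstrongclosures} applies directly and yields that $\mathcal{K}$ is Ramsey; since $\mathcal{K}$ coincides with $\overrightarrow{\mathcal{PRBIBD}}$ under the above identification, both classes are Ramsey, completing the proof. I expect the completion argument to be the only step requiring care, and I would present it in close analogy with the earlier two proofs, emphasizing the new two-function interaction as the genuinely additional ingredient.
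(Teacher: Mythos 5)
Your overall route coincides with the paper's: the paper likewise only axiomatises $\K$ inside $\mathcal{R}$, observes that the conditions are local, asserts (without detail, as ``a combination of Sections \ref{sec:steiner} and \ref{sec:orientedgraphs}'') that $\K$ is an $\mathcal{R}$-multiamalgamation class, and invokes Theorem~\ref{thm:mainstrongclosures}; your hereditary and strong-amalgamation checks and the block-filtering completion are exactly the intended argument. However, one concrete ingredient of your completion plan would fail as stated: you propose to verify axioms (6) and (7) ``using that $\str{C}_0$ itself witnesses a global consistent completion.'' It witnesses no such thing. Here $\mathcal{R}=\overrightarrow{\Str}(L)$, so $\str{C}_0$ is an \emph{arbitrary} ordered $L$-structure satisfying none of the design axioms; exactly as in the proofs of Theorem~\ref{equivalence} and the Steiner theorem, the homomorphism-embedding into $\str{C}_0$ buys you only the acyclicity of $\leq_{\str{C}}$, i.e.\ the order completion, and nothing whatsoever about blocks or equivalence classes.

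The genuine divergence from the Steiner case --- which you sense but do not resolve --- is that a trivial choice of $n(\str{B},\str{C}_0)$ is no longer enough. In the Steiner proof $n=0$ works because functionality of $\func{C}{}$ alone prevents conflicts between retained blocks: two blocks sharing $t$ vertices are forced equal since $F$ is single-valued. Here two blocks $K$, $K'$ may share exactly \emph{one} vertex while $\func{C}{2}(K)=\func{C}{2}(K')$, with each block-plus-value lying inside a (different) copy of $\str{B}$; functionality does not exclude this, your filtering retains both (axiom (6) forces you to keep the $\func{}{2}$-values once the blocks are kept), and the resulting $\str{C}'$ violates axiom (7). Indeed, if the shared vertex occupies different positions in the two ordered blocks, then \emph{no} completion with respect to copies of $\str{B}$ exists at all, since any map embedding both copies would produce two non-disjoint blocks with a common $\func{}{2}$-value, and axiom (7) in the target would force an identification of the blocks that the orders forbid. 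The cure is the part of Definition~\ref{def:multiamalgamation} your sketch leaves idle: choose $n$ large enough (bounded in terms of $k$) that hypothesis (b) applies to the substructure generated by $K\cup K'\cup\{\func{C}{2}(K)\}$. An order-incompatible configuration of this kind admits no $\K$-completion, hence cannot occur in $\str{C}$; an order-compatible one is handled by letting the completion map $f$ identify the two blocks, which is legitimate because $f$ need not be injective, only an embedding on each copy of $\str{B}$. Without this use of (b) and a non-zero $n$, your argument modelled on the earlier $n=0$ proofs does not close --- though, to be fair, the paper leaves precisely this verification to the reader as well.
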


The previous construction may be further exploited to other partitioned and, say, $\mathcal H$ decomposable structures.
In the next section we give such an example. However we do not aim for generality here.

\section{$H$-factorization theorem}
In this example we consider undirected graphs with embeddings (i.e. induced subgraphs). Suppose that $H$ is a fixed graph (see remark at the end concerning possible generalisations).
Recall that $G\choose H$ the set of all induced subgraphs of $G$  isomorphic to  $H$.
An \emph{$H$-matching} in a graph $G$ is a collection $H_1$, $H_2$, \ldots, $H_t$ (vertex disjoint) induced subgraphs of $G$ (i.e. we assume that
$V(H_i)\cap V(H_j)=\emptyset$ for $i\neq j$) such that all $H_i$ are isomorphic to $H$. An $H$-matching will be denoted by $M_H$, thus in our case $M_H=(H_1,H_2,\ldots,H_m)$. An \emph{$H$-factorization} of $G$ is a collection of $H$-matchings which partition  the set $G\choose H$. \emph{Partial $H$-factorization} of $G$ is a collection of disjoint $H$-matching in the set $G\choose H$.

Graphs with a partial $H$-factorization will be considered with an embedding preserving the factorization. Given $G=(V,E)$ and $G'=(V',E')$ with partial factorizations
$(M^1_H(G),M^2_H(G),\ldots,M^m_H(G))$ and $(M^1_H(G'),M^2_H(G'),\ldots,\allowbreak M^{m'}_H(G'))$ we say that  a pair $f , \iota$  
is an  $H$-factorization embedding if:
\begin{enumerate}
 \item $f$ is an embedding $G$ into $G'$,
\item $\iota$ is a monotone injection $\{1,\ldots, m\} \to \{1,\ldots,m'\}$,
 \item for every two graphs $H', H'' $ isomorphic to $H$ we have $H',H'' \in M^i_H(G)$ if and only if $f(H'),f(H'' )\in M^{\iota(i)}_H(G)$.
\end{enumerate}
The last condition of course means that two graphs $H'$, $H''$ isomorphic to  $H$ belong to the same class of partial $H$-factorizations of $G$ if and only if their images $f(H'), f(H'')$ belong to the same class of partial $H$-factorizations of $G'$.

Denote by $\mathrm{Fact}(H)$ the class of all finite graphs endowed with partial $H$-factorizations. By $\overrightarrow{\mathrm{Fact}}(H)$ we denote the corresponding class of linearly ordered partially $H$-factorized graphs endowed with monotone embedding. We shall see that   $\overrightarrow{\mathrm{Fact}}(H)$ can be always interpreted as an multi-amalgamation class. 
 For this it is convenient to consider structures in $\mathrm{Fact}(H)$ as structures with  two mappings, and two relations as follows:

Again, we consider the language $L = \{R,\leq,R^S,\func{}{1},\func{}{2}\}$.
 Function $\func{}{1}$ has domain arity $2$ and range arity $|H|$. The function 
$\func{}{2}$  has  domain of arity $|H|$ and range of arity $1$, $R^S$ has arity $1$.

In this situation we consider the class $\mathcal{R}$ of all $L$-structures 
$\str{A} = (A,\rel{A}{},\leq_{\str A}\nobreak ,\allowbreak R^S_{\str{A}},\allowbreak \func{A}{1},\func{A}{2})$:
Again applying Theorem \ref{thm:models2} we know that $\mathcal R$ is a Ramsey class.
However the class $\mathrm{Fact}(H)$ has some more properties and they will be reflected by the  subclass $\mathcal K$ of $\mathcal R$ of all structures
$\str{A} = (A,\rel{A}{},\leq_{\str{A}}\nobreak ,\allowbreak S_\str{A},\func{A}{1},\func{A}{2})$ which satisfy the following
(for brevity, we put again $S = \{x, (x) \in R\}$):

\begin{enumerate}
\item $\leq_\str{A}$ is a linear ordering of $A$;
\item for every $v\in \vec{t}\in \dom(\func{A}{2})$ it holds that $v\notin S$;
\item $\left (\bigcup \mathrm{Rg}(\func{A}{2})\right) \subseteq S$;
\item  $(A,\rel{A}{})$ is an undirected graph (denoted by $G_{\str{A}})$;
\item for every $t$-tuple $\vec{t}\in \dom(\func{A}{2})$  the graph  $G_{\str{A}}$ induces on $\vec{t}$ a graph isomorphic to  ${H}$;
 \item $\func{A}{1}$ is a partial function from $A^2$ to  $A^{|H|}$;
\item  $\func{A}{2}$ is a partial function from $A^{|H|}$ to $A$;
\item $\func{A}{1}$ is symmetric; explicitely $\func{A}{1}(u,v) = \func{A}{1}(v,u)$;
\item $\func{A}{2}$ is symmetric and defined on tuples of distinct elements;
  \item for every pair $u,v\in \dom(\func{A}{1})$
 it holds that both vertices $u,v$  belong also to the image  $\func{A}{1}(u,v)$;
\item every pair $u',v'$  of distinct vertices such that $\func{A}{1}(u,v)=\{u',v'\}$ is in $\dom(\func{A}{1})$ and $\func{A}{1}(u,v)=\func{A}{1}(u',v')$;
\item the domain of $\func{A}{2}$ is equal to the range of $\func{A}{1}$;
\item $\func{A}{2}$ is ``idempotent'' meaning that $\dom(\func{A}{2}) \cap \mathrm{Rg}(\func{A}{2}) = \emptyset$;
\item if $\func{A}{2}(K) =\func{A}{2}(K')$ then $K$  and $K'$  are disjoint sets.
\end{enumerate}

All the conditions on functions $\func{A}{1}$ and $\func{A}{2}$ are local and 
one can prove that with this interpretation the class $\mathcal{K}$ is a $\mathcal R$-multiamalgamation class and thus 
Theorem \ref{thm:mainstrongclosures} can be applied.
As a consequence we have the following 

\begin{thm}
Then $\mathcal K$ is a Ramsey class.
\end{thm}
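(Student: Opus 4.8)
The plan is to follow exactly the pattern established in the proofs of Theorem~\ref{equivalence} and the Steiner system theorem, namely to verify that the subclass $\mathcal{K}$ of $\mathcal{R}$ is an $\mathcal{R}$-multiamalgamation class and then invoke Theorem~\ref{thm:mainstrongclosures}. First I would observe that $\mathcal{R}=\overrightarrow{\Str}(L)$ is Ramsey by Theorem~\ref{thm:models2}, so it suffices to check the three conditions of Definition~\ref{def:multiamalgamation} for $\mathcal{K}$. Hereditarity is immediate: all fourteen axioms are preserved under passing to substructures, since each condition is universally quantified over tuples in the domains of $\func{}{1},\func{}{2}$ and over vertices, and restricting to a closed substructure only removes tuples, never creating a violation.

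Next I would treat the strong amalgamation property. Given $\str{A},\str{B}_1,\str{B}_2\in\mathcal{K}$ with embeddings $\alpha_1,\alpha_2$ of $\str{A}$, I form the free amalgam over $\str{A}$ (disjoint union of $\str{B}_1,\str{B}_2$ glued along $\str{A}$, with no new tuples spanning both sides and $\leq$ completed to a linear order). Because the amalgam introduces no new blocks, no new $H$-copies, and no new equivalence identifications across the $\str{A}$-boundary, each local axiom continues to hold: symmetry, the idempotency condition (13), the block/equivalence correspondence (axiom 12 that $\dom(\func{A}{2})=\mathrm{Rg}(\func{A}{1})$), and the disjointness condition (14) are all verified component-wise. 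Thus the free amalgam already lies in $\mathcal{K}$ and is a strong amalgam, giving condition~\ref{cond:amalgamation}.

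The genuine work, exactly as in the earlier theorems, is the \emph{locally finite completion property}. Here, however, I expect the constant $n(\str{B},\str{C}_0)$ \emph{cannot} be taken to be $0$, in contrast to Theorems~\ref{equivalence} and the Steiner theorem. The reason is that the graph relation $\rel{}{}$ together with axiom (5)---that every block-tuple of $\func{}{2}$ induces a copy of $H$ in $G_{\str{A}}$---couples the relational and functional data: filling in a block forces a prescribed induced $H$ on its vertices, and two overlapping potential blocks can impose conflicting edge/non-edge demands. So the plan is to set $n=n(\str{B},\str{C}_0)=|H|$ (or the maximum size of an irreducible structure arising from $\str{B}$ together with $H$), so that the hypothesis ``every substructure of $\str{C}$ on at most $n$ vertices has a $\mathcal{K}$-completion'' guarantees local consistency of the induced-$H$ constraints. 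Given $\str{C}\longrightarrow(\str{B})^{\str{A}}_2$ with a homomorphism-embedding to $\str{C}_0$, I then build $\str{C}'$ on vertex set $C$: complete $\leq_{\str{C}}$ to a linear order, keep $\func{C'}{1}(\vec{t})=\func{C}{1}(\vec{t})$ and $\func{C'}{2}(K)=\func{C}{2}(K)$ precisely when all involved vertices lie in a single copy $\widetilde{\str{B}}\in{\str{C}\choose\str{B}}$, and set the edges of $G_{\str{C}'}$ so that each retained block induces the correct copy of $H$. Preserving only the intra-copy-of-$\str{B}$ data means each copy of $\str{B}$ embeds, which is all the $\K$-completion with respect to copies of $\str{B}$ requires.

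The main obstacle I anticipate is precisely ensuring that the edge assignment in $G_{\str{C}'}$ is \emph{well-defined and simultaneously consistent} across all retained blocks: a single pair $\{x,y\}$ may lie in several copies of $\str{B}$ and hence in several retained blocks (via $\func{C}{1}$ and axiom~11), and axiom~(5) demands that the induced structure on each block be an isomorphic copy of $H$, so these demands must not contradict one another. This is where the locally finite completion hypothesis does the real work: since the existence of a $\K$-completion on every substructure of size at most $n=|H|$ rules out local conflicts, and since the homomorphism-embedding into $\str{C}_0$ guarantees global acyclicity of $\leq$ and compatibility of the function values, the edge relation can be defined consistently block by block. I would verify that the resulting $\str{C}'$ satisfies all fourteen axioms---most are inherited directly from $\str{C}$ since we only \emph{restrict} the functions to intra-copy tuples---and that the identity map is the required $\K$-completion of $\str{C}$ with respect to copies of $\str{B}$. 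Once this is checked, Theorem~\ref{thm:mainstrongclosures} immediately yields that $\mathcal{K}$ is Ramsey.
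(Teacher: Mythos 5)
Your overall plan---check the three conditions of Definition~\ref{def:multiamalgamation} and invoke Theorem~\ref{thm:mainstrongclosures}---is exactly the route the paper takes (the paper in fact gives only this sketch, with no details). But two of your concrete steps fail, and the first is fatal. The strong amalgamation property cannot be ``verified component-wise'', and the free amalgam does \emph{not} in general lie in $\K$: take $\str{A}\in\K$ consisting of one ordinary vertex $u$ and one label vertex $w\in S$ with no function values (this is a closed substructure, since no pair from $\{u,w\}$ can lie in $\dom(\func{}{1})$ or $\dom(\func{}{2})$ by the axioms), let $\str{B}_1$ contain a block $K\ni u$ with $K\cap A=\{u\}$ and $\func{B_1}{2}(K)=\{w\}$, and let $\str{B}_2$ contain a block $K'\ni u$ with $K'\cap A=\{u\}$ and $\func{B_2}{2}(K')=\{w\}$. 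Since embeddings preserve function values, \emph{every} strong amalgam contains the two distinct blocks $K\neq K'$ with $K\cap K'=\{u\}$ and the same label $w$, violating the disjointness axiom (14). The troublesome identification happens \emph{through the base} $\str{A}$, not across the two sides, so your observation that no new tuples span the boundary does not help. This matching constraint is precisely what distinguishes this class from $\mathcal{LEQ}_k$ and from Steiner systems (where $n=0$ and free-style amalgamation do work), and dealing with it is the actual content of the theorem; it requires a real idea (for instance, enriching the closure so that a vertex together with a class label generates the block through that vertex in that class, which disqualifies the $\str{A}$ above from being closed), not a routine check.

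Your completion step also misdiagnoses where $n>0$ is needed. The induced-$H$ axiom (5) is automatic for the restrict-to-copies completion: every retained block lies inside a copy of $\str{B}$, copies are induced (embedded, closed) substructures of $\str{C}$, and any edge of $\str{C}$ with both endpoints in a copy is an edge of that copy; hence the graph induced on a retained block is already isomorphic to $H$, there is nothing to ``set'', and adding or altering edges risks destroying the very embeddings of copies of $\str{B}$ that a completion with respect to copies of $\str{B}$ must preserve. The genuine obstruction is once more axiom (14): two blocks retained from \emph{different} copies of $\str{B}$ may share a vertex and a label vertex, and then no definition of $\func{C'}{2}$ preserving both copies satisfies (14) (one can even choose the order types of the two blocks around the shared vertex incompatibly, so that a completion identifying the two blocks is impossible as well). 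That bad configuration occupies up to $2|H|+1$ vertices (two overlapping blocks plus their common label), so it is not excluded by $\K$-completability of substructures of size $n=|H|$: your $n$ is too small, and your local-consistency argument is aimed at the wrong axiom.
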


\section {Final comments}
\paragraph{1.} The key constructions of this paper may be iterated. In this paper we presented iterations of at most two functions.
Examples given here illustrate two main features of applications (and influence) of inclusion of functions in our vocabularies: we have trees and equivalences, functions and partitions.

\paragraph{2.} The calculus of functions and partitions leads to abstract data types and Ramsey classes which may have some computer science consequences. There are some precedents, see influential \cite{yao1981}. 

\paragraph{3.}
We may have more arities. For example we may consider $\mathcal H$-factorisation for a family $\mathcal H$ of graphs. The arities are then  the set $\{|H|, H \in \mathcal{H}\}$. This simply means  that the domain and range can contain any arity in the set. We may even have  
an infinite set of arities. In dealing with Ramsey classes this does not lead to difficulties as we always deal with finite structures of sizes given in advance. (Recall the basic scheme: Given $\str{A, B}$, we look for $\str{C}$.) One can clearly generalise here.

We decided to stop here.

\bibliography{ramsey.bib}

\begin{thebibliography}{10}

\bibitem{Abramson1978}
Fred~G. Abramson and Leo~A. Harrington.
\newblock Models without indiscernibles.
\newblock {\em Journal of Symbolic Logic}, 43:572--600, 1978.

\bibitem{Aranda2017}
Andres Aranda, David Bradley-Williams, Jan Hubi{\v c}ka, Miltiadis Karamanlis,
  Michael Kompatscher, Mat{\v e}j Kone{\v c}n{\'y}, and Micheal Pawliuk.
\newblock Ramsey expansions of metrically homogeneous graphs.
\newblock In preparation, 66 pages, 2017.

\bibitem{bhat2016ramsey}
Vindya Bhat, Jaroslav Ne{\v{s}}et{\v{r}}il, Christian Reiher, and Vojt{\v{e}}ch
  R{\"o}dl.
\newblock A {R}amsey class for {S}teiner systems.
\newblock arXiv:1607.02792, 2016.

\bibitem{Cherlin1999}
Gregory Cherlin, Saharon Shelah, and Niandong Shi.
\newblock Universal graphs with forbidden subgraphs and algebraic closure.
\newblock {\em Advances in Applied Mathematics}, 22(4):454--491, 1999.

\bibitem{Evans2}
David~M. Evans, Jan Hubi{\v c}ka, and Jaroslav Ne{\v{s}}et{\v{r}}il.
\newblock Automorphism groups and {R}amsey properties of sparse graphs.
\newblock In preparation, 56 pages, 2017.

\bibitem{Evans3}
David~M. Evans, Jan Hubi{\v c}ka, and Jaroslav Ne{\v{s}}et{\v{r}}il.
\newblock Ramsey properties and extending partial automorphisms for classes of
  finite structures.
\newblock {\em arXiv:1705.02379, 33 pages}, 2017.

\bibitem{Graham1972}
Ronald~L. Graham, Klaus Leeb, and Bruce~L. Rothschild.
\newblock Ramsey's theorem for a class of categories.
\newblock {\em Advances in Mathematics}, 8(3):417--433, 1972.

\bibitem{Graham1971}
Ronald~L. Graham and Bruce~L. Rothschild.
\newblock Ramsey's theorem for $n$-parameter sets.
\newblock {\em Transactions of the American Mathematical Society},
  159:257--292, 1971.

\bibitem{Halpern1966}
James~D. Halpern and Hans L{\"a}uchli.
\newblock A partition theorem.
\newblock {\em Transactions of the American Mathematical Society},
  124(2):360--367, 1966.

\bibitem{Hodges1993}
Wilfrid Hodges.
\newblock {\em Model theory}, volume~42.
\newblock Cambridge University Press, 1993.

\bibitem{Hubivcka2014}
Jan Hubi{\v{c}}ka and Jaroslav Ne{\v{s}}et{\v{r}}il.
\newblock Bowtie-free graphs have a {R}amsey lift.
\newblock Submitted, arXiv:1402.2700, 22 pages, 2014.

\bibitem{Hubicka2016}
Jan Hubi{\v{c}}ka and Jaroslav Ne\v{s}et\v{r}il.
\newblock All those {R}amsey classes ({R}amsey classes with closures and
  forbidden homomorphisms).
\newblock Submitted, arXiv:1606.07979, 58 pages, 2016.

\bibitem{Ivanov2015}
Aleksander Ivanov.
\newblock An $\omega$-categorical structure with amenable automorphism group.
\newblock {\em Mathematical Logic Quarterly}, 61(4-5):307--314, 2015.

\bibitem{Kechris2005}
Alexander~S. Kechris, Vladimir~G. Pestov, and Stevo Todor{\v c}evi{\' c}.
\newblock Fra{\"\i}ss{\'e} limits, {R}amsey theory, and topological dynamics of
  automorphism groups.
\newblock {\em Geometric and Functional Analysis}, 15(1):106--189, 2005.

\bibitem{Keevash2014}
Peter Keevash.
\newblock The existence of designs.
\newblock arXiv:1401.3665, 2014.

\bibitem{Komjath1999}
P{\'e}ter Komj{\'a}th.
\newblock Some remarks on universal graphs.
\newblock {\em Discrete Mathematics}, 199(1):259--265, 1999.

\bibitem{Milliken1979}
Keith~R. Milliken.
\newblock A {R}amsey theorem for trees.
\newblock {\em Journal of Combinatorial Theory, Series A}, 26(3):215--237,
  1979.

\bibitem{Nevsetvril1996}
Jaroslav Ne{\v{s}}et{\v{r}}il.
\newblock Ramsey theory.
\newblock In R.~L. Graham, M.~Gr\"{o}tschel, and L.~Lov\'{a}sz, editors, {\em
  Handbook of Combinatorics}, volume~2, pages 1331--1403. MIT Press, Cambridge,
  MA, USA, 1995.

\bibitem{Nevsetvril2012}
Jaroslav Ne{\v{s}}et{\v{r}}il and Patrice~Ossona de~Mendez.
\newblock {\em Sparsity: Graphs, Structures, and Algorithms}, volume~28.
\newblock Springer Science \& Business Media, 2012.

\bibitem{NN1971}
Jaroslav Ne{\v{s}}et{\v{r}}il and Helena Ne{\v{s}}et{\v{r}}ilov\'{a}.
\newblock A remark on {K}irkman's school girls problem (in {C}zech, {E}nglish
  abstract).
\newblock {\em D\v{e}jiny v\v{e}dy a techniky (History of Science and
  Technology)}, 5:171--173, 1971.

\bibitem{Nevsetvril1976}
Jaroslav Ne{\v{s}}et{\v{r}}il and Vojt{\v{e}}ch R{\"o}dl.
\newblock The {R}amsey property for graphs with forbidden complete subgraphs.
\newblock {\em Journal of Combinatorial Theory, Series B}, 20(3):243--249,
  1976.

\bibitem{Nevsetvril1987}
Jaroslav Ne{\v{s}}et{\v{r}}il and Vojt{\v{e}}ch R{\"o}dl.
\newblock Strong {R}amsey theorems for {S}teiner systems.
\newblock {\em Transactions of the American Mathematical Society},
  303(1):183--192, 1987.

\bibitem{Nevsetvril1977b}
Jaroslav Ne\v{s}et\v{r}il and Vojt{\v{e}}ch R{\"o}dl.
\newblock Partitions of finite relational and set systems.
\newblock {\em Journal Combinatorial Theory, Series A}, 22(3):289--312, 1977.

\bibitem{The2013}
Lionel Nguyen Van~Th{\'e}.
\newblock More on the {K}echris--{P}estov--{T}odorcevic correspondence:
  Precompact expansions.
\newblock {\em Fundamenta Mathematicae}, 222:19--47, 2013.

\bibitem{ray1971}
Dwijendra~K. Ray-Chaudhuri and Richard~M. Wilson.
\newblock Solution of {K}irkman’s schoolgirl problem.
\newblock In Theodore~S. Motzkin, editor, {\em Proceedings of the Symposia in
  Pure Mathematics}, volume~19 of {\em Combinatorics}, pages 187--203. American
  Mathematical Society, 1971.

\bibitem{Sauer2013}
Norbert~W. Sauer.
\newblock Distance sets of {U}rysohn metric spaces.
\newblock {\em Canadian Journal of Mathematics}, 65(1):222--240, 2013.

\bibitem{Sokic2016}
Miodrag Soki{\'c}.
\newblock Unary functions.
\newblock {\em European Journal of Combinatorics}, 52:79--94, 2016.

\bibitem{Solecki2012}
S{\l}awomir Solecki.
\newblock Direct {R}amsey theorem for structures involving relations and
  functions.
\newblock {\em Journal of Combinatorial Theory, Series A}, 119(2):440 -- 449,
  2012.

\bibitem{Solecki2016}
S{\l}awomir Solecki.
\newblock Monoid actions and ultrafilter methods in {R}amsey theory.
\newblock arXiv:1611.06600, 2016.

\bibitem{wilsonI}
Richard~M. Wilson.
\newblock An existence theory for pairwise balanced designs {I}: Composition
  theorems and morphisms.
\newblock {\em Journal of Combinatorial Theory, Series A}, 13(2):220--245,
  1972.

\bibitem{wilsonII}
Richard~M. Wilson.
\newblock An existence theory for pairwise balanced designs {II}: The structure
  of {PBD}-closed sets and the existence conjectures.
\newblock {\em Journal of Combinatorial Theory, Series A}, 13(2):246--273,
  1972.

\bibitem{wilsonIII}
Richard~M. Wilson.
\newblock An existence theory for pairwise balanced designs, {III}: Proof of
  the existence conjectures.
\newblock {\em Journal of Combinatorial Theory, Series A}, 18(1):71--79, 1975.

\bibitem{yao1981}
Andrew Chi-Chih Yao.
\newblock Should tables be sorted?
\newblock {\em Journal of the ACM (JACM)}, 28(3):615--628, 1981.

\end{thebibliography}
\end{document}